\documentclass[11pt,reqno]{amsart}
\textwidth=5in
\textheight=7.5in
\usepackage{amssymb}

\usepackage{euscript}



\newtheorem{Lemma}{Lemma}[section]
\newtheorem{Theorem}[Lemma]{Theorem}
\newtheorem{Proposition}[Lemma]{Proposition}
\newtheorem{Corollary}[Lemma]{Corollary}
\newtheorem{Claim}[Lemma]{Claim}

\newcommand{\R}{\mathbb R}
\newcommand{\Z}{\mathbb Z}
\newcommand{\supp}{\operatorname{supp}}
\newcommand{\de}[2]{\frac{\partial #1}{\partial #2}}
\newcommand{\p}{\partial}
\newcommand{\graph}{\operatorname{graph}}

\begin{document}
\title[The X-ray transform]{The X-ray transform on a general family of curves on Finsler surfaces}

\author[Yernat M. Assylbekov]{Yernat M. Assylbekov}
\address{Department of Mathematics, University of Washington, Seattle, WA 98195-4350, USA}
\email{y\_assylbekov@yahoo.com}

\author[Nurlan S. Dairbekov]{Nurlan S. Dairbekov}
\address{Kazakh British Technical University,
Tole bi 59, 050000 Almaty, Kazakhstan }
\email{Nurlan.Dairbekov@gmail.com}

\begin{abstract}
For a compact oriented Finsler surface with smooth boundary,
we consider the scalar and vector integral geometry problems over a general family of curves
running between boundary points and parametrized by arclength.
We impose a natural condition which results in the no conjugate points
condition in the case when the curves in question are geodesic lines.
Our main theorem generalizes Mukhometov's theorem in several directions.

We also consider these problems on a closed oriented Finsler surface.
In this case the integral geometry problems make sense provided that
sufficiently many curves in the family are periodic.
To this end, we assume that the induced flow on the unit circle bundle of the surface is Anosov.
Also, we study the cohomological equation of thermostats without conjugate points.
\end{abstract}

\maketitle


\section{Introduction}
\subsection{Surfaces with boundary}
Let $M$ be a  compact oriented surface (two-dimensional manifold) with boundary. 
Let $\Gamma$ be a family of regular parametrized curves in $M$ between boundary points such that
\begin{enumerate}
\item[(A1)] For every interior point $x \in M$ and every $v\in
T_xM\setminus\{0\}$, there is exactly one curve
in $\Gamma$ passing through $x$ in the direction of $v$
(considering the curves obtained by shift of the parameter to be the same curve).
\end{enumerate}
We denote by $\gamma_{x,v}$ such a curve with its parameter shifted so
that $\gamma_{x,v}(0)=x$, $\dot{\gamma}_{x,v}(0)=cv$ ($c>0$).
We suppose our family $\Gamma$ to be smooth in the sense that
\begin{enumerate}
\item[(A2)] The partial map
$$
(x,v,t)\mapsto \gamma_{x,v}(t)
$$                    
is $C^{\infty}$-smooth.
\end{enumerate}

We assume our family $\Gamma$ to have no conjugate points in the sense that
\begin{enumerate}
\item[(A3)]  The partial map $\exp^\Gamma_x:T_xM\setminus\{0\}\to M$, 
defined as
$$
\exp^\Gamma_x(t\dot\gamma_{x,v}(0))=\gamma_{x,v}(t),\quad t>0,
$$
is a local diffeomorphism for every $x\in M$.
\end{enumerate}

Let $f$ be a smooth function and $\alpha$ be a smooth 1-form on $M$.
Define the {\it X-ray transform} of the function $\psi(x,v)=f(x)+\alpha_x(v)$ by
$$
I_{\Gamma}\psi(\gamma)=\int_{\gamma}\{f(\gamma(t))+\alpha_{\gamma(t)}(\dot{\gamma}(t))\}\,dt,\quad
\gamma\in\Gamma,
$$
meaning the integral over $\gamma$ to stand for the integral over the
domain of the parameter.
This transform embraces the scalar ($\alpha=0$) and vector ($f=0$) X-ray transforms as particular
instances, and cannot be decoupled in general, as can be seen, for
example, from the case of $\Gamma$ being the family of magnetic geodesics (see \cite{DPSU}).
It is clear that $I_\Gamma$ has a non-trivial kernel since
$$
I_{\Gamma}dh(\gamma)=\int_{\gamma}dh=0
$$
for all $\gamma\in\Gamma$ and for any $h\in C^\infty(M)$ such that $h|_{\p M}=0$.
A natural question of integral geometry is whether these are the only elements of the kernel.

Since the integral of a scalar function over a curve respects the parametrization of the curve,
we assume $M$ to be furnished with some Finsler metric $F$ and consider the condition:
\begin{enumerate}
\item[(A4)] All curves in $\Gamma$ are parametrized by arclength with respect to $F$:
$$
F(\dot\gamma(t))=1
$$
for all $t$ and every $\gamma\in \Gamma$.
\end{enumerate}

Our first result is the following:
\begin{Theorem}\label{theorem A}
Let $M$ be a compact oriented surface with boundary, let $F$ be a Finsler metric on $M$, and let
$\Gamma$ be a family of curves in $M$ between boundary points satisfying conditions (A1)-(A4).
Suppose $\psi(x,v)=f(x)+\alpha_x(v)$, where $f$ is a smooth function and
$\alpha$ is a smooth $1$-form on $M$.
Then $I_\Gamma\psi(\gamma)=0$ for all $\gamma\in \Gamma$ if and only
if $f=0$ and $\alpha=dh$ for some $h\in C^\infty(M)$ such that $h|_{\p{M}}=0$.
\end{Theorem}

In \cite{Mukh2}, R. G. Mukhometov proved a similar result for a general family of curves on
subdomains of the Euclidean plane. In higher dimension, the scalar integral geometry problem for a
real analytic family of curves was solved by B. Frigyik, P. Stefanov and G. Uhlmann \cite{FSU}. It
is worth noting that the literature on the question is abandoned in the case when $\Gamma$ is the
family of geodesics of a Riemannian metric.

In a purely vectorial case (when $f=0$), we may freely reparametrize curves in $\Gamma$ without
influencing the X-ray transform.
Therefore, we have the following consequence of Theorem \ref{theorem A}:

\begin{Corollary}\label{theorem B}
Let $M$ be a compact oriented surface with boundary, let
$\Gamma$ be a family of curves in $M$ between boundary points satisfying (A1)--(A3),
and let $\alpha$ be a smooth $1$-form on $M$.
Then $I_\Gamma\alpha(\gamma)=0$ for all $\gamma\in \Gamma$ if and only
if $\alpha=dh$ for some $h\in C^\infty(M)$ such that $h|_{\p{M}}=0$.
\end{Corollary}

In \cite{HS}, S. Holman and P. Stefanov solved the vector integral geometry problem
for a real analytic family of curves in any dimension.

\subsection{Closed surfaces}
A similar problem for closed surfaces is interesting as well.
Let $M$ be a  closed (i.e., compact and boundaryless) oriented surface, and let
$\Gamma$ be a family of regular parametrized curves
each of which is defined on the whole real axis.

Assume (A1) and (A2),
and let $f$ be a smooth function and $\alpha$ a smooth 1-form on $M$.
Define the {\it X-ray transform} of the function $\psi(x,v)=f(x)+\alpha_x(v)$ by
$$
I_{\Gamma}\psi(\gamma)
=\int_{\gamma}\{f(\gamma(t))+\alpha_{\gamma(t)}(\dot{\gamma}(t))\}\,dt
\quad\text{for periodic }\gamma\in\Gamma,
$$
meaning the integral over $\gamma$ to stand for the integral over the least period.

As before, $I_\Gamma$ has a non-trivial kernel since
$I_{\Gamma}dh(\gamma)=0$
for all periodic $\gamma\in\Gamma$ and for any $h\in C^\infty(M)$.
In this setting, the integral geometry problem asks whether these are the only elements in the
kernel. Of course, sufficiently many curves in $\Gamma$ must be periodic for this to be true. If
$F$ is a Finsler metric on $M$ and (A4) holds, $\Gamma$ defines a flow $\phi_t$ on the unit circle
bundle by the rule
$$
\phi_t(x,v)\mapsto (\gamma_{x,v}(t),\dot\gamma_{x,v}(t)).
$$
Instead of (A3) we require
\begin{enumerate}
\item[(A3')] The flow $\phi_t$ on $SM$ is Anosov.
\end{enumerate}

Recall that the Anosov property means that there is a continuous invariant splitting
$T(SM)=\mathbb{R} \mathbf F\oplus E^{u}\oplus E^{s}$ ($\mathbf F$ being the generator of the flow)
in such a way that
there are constants $C>0$ and $0<\rho<1<\eta$ such that
for all $t>0$ we have
\[\|d\phi_{-t}|_{E^{u}}\|\leq C\,\eta^{-t}\;\;\;\;\mbox{\rm
and}\;\;\;\|d\phi_{t}|_{E^{s}}\|\leq C\,\rho^{t},\]
where the norms are taken with respect to the Sasaki type Riemannian metric on $SM$ induced by the
Finsler metric $F$.

\begin{Theorem}\label{theorem C}
Let $(M,F)$ be a closed oriented Finsler surface, and let
$\Gamma$ be a family of curves in $M$ satisfying (A1)-(A2), (A3'), and (A4).
Suppose $\psi(x,v)=f(x)+\alpha_x(v)$, where $f$ is a smooth function and
$\alpha$ is a smooth $1$-form on $M$.
Then $I_\Gamma\psi(\gamma)=0$ for all $\gamma\in \Gamma$ if and only
if $f=0$ and $\alpha=dh$ for some $h\in C^\infty(M)$.
\end{Theorem}

In \cite{GK}, V. Guillemin and D. Kazhdan proved Theorem \ref{theorem C}
for $F$ a negatively curved Riemannian metric and $\Gamma$
the family of unit-speed geodesics of $F$.
A similar result was obtained by G. P. Paternain for magnetic flows  in \cite{Pkam}.
All these results were based on Fourier analysis.
In \cite{DPkam}, N. S. Dairbekov and G. P. Paternain proved Theorem \ref{theorem C}
for the case of the magnetic flow on a Riemannian surface.
The same result was proved in \cite{DPentr} by
N. S. Dairbekov and G. P. Paternain for thermostats on Riemannian surfaces
and in \cite{DPrigid} for magnetic flows on Finsler manifolds of any dimension.

\subsection{Thermostats}
The above-mentioned general families of curves on surfaces are conveniently defined in terms of
(generalized) thermostats.

Consider as before a compact oriented surface $M$ and a Finsler metric $F$ on $M$. Let $SM$ be the unit
circle bundle of $(M,F)$ and $\pi:SM\to M$ be the canonical projection, $\pi(x,v)=x$.
For any $\lambda\in C^\infty(TM\setminus\{0\})$, consider the equation
\begin{equation}\label{termo}
\frac{D \dot{\gamma}}{dt}=\lambda(\gamma, \dot{\gamma})i \dot{\gamma},
\end{equation}
where $i$ indicates the rotation by $\pi/2$ according to the orientation of $M$.
Every solution of \eqref{termo} has constant speed,
and we restrict ourselves to unit-speed solutions. In this case, it suffices to assume
that $\lambda\in C^\infty(SM)$.
A curve parametrized by arclenth and satisfying \eqref{termo} will be referred to
as {\it $\lambda$-geodesic}.
We call the triple $(M,F,\lambda)$
a (generalized) {\it thermostat}. In case $\lambda$ is (the pullback of) a function on $M$,
we have a magnetic system (see, for example, \cite{DPSU}).
If $\lambda$ is a $1$-form (regarded as a function on $SM$),
we have a Gaussian thermostat (see, for example, \cite{DPentr}).

In the case when $M$ has boundary, we assume that the thermostat in question is {\it nontrapping}
in the sense that every $\lambda$-geodesic has finite exit times both in the positive and negative
directions. If $M$ is a closed surface, we assume that that every $\lambda$-geodesic is complete,
i.e., defined on the whole real axis. In these cases, we can declare $\Gamma$ to be the family of
$\lambda$-geodesics of our thermostat. Then (A1), (A2), and (A4) are obviously satisfied. On the
other hand, it is easy to see that the converse is true as well.

\begin{Theorem}\label{Gamma-defines-flow}
If $\Gamma$ is a family of curves on a compact oriented Finsler surface $(M,F)$, satisfying (A1),
(A2) and (A4), then it is the family of $\lambda$-geodesics for a suitable $\lambda$.
\end{Theorem}
\begin{proof} Define $\lambda$ as
$$
\lambda(x,v,t)
=\Big\langle\frac{D\dot{\gamma}_{x,v}(t)}{dt},i\dot{\gamma}_{x,v}(t)\Big\rangle_{\dot{\gamma}_{x,v}(t)},
$$
the inner product on the right-hand side taken with respect to the fundamental tensor in Finsler
geometry:
$$
g_{ij}(x,v)=\frac12[F^2]_{v^iv^j}(x,v).
$$

Condition (A1) implies that the function $\lambda$ does not depend on $t$.
Then $\Gamma$ becomes the family of $\lambda$-geodesics of the thermostat $(M,F,\lambda)$.
\end{proof}

\subsection{Cohomological equation}
The cohomological (or kinetic) equation of a flow is simply $\mathbf F(u)=\psi$, where
$\mathbf F$ is the infinitesimal generator of the flow  and $u, \psi$ are functions on $SM$.
The importance of the cohomological equation in dynamical systems is well known; it arises
for example in the study of invariant measures, conjugacy problems, reparametrizations,
rigidity questions and inverse problems.

It follows from \cite{Ghy} that under condition (A3') the flow $\phi_t$ is transitive and, by
the smooth {Liv\v cic} theorem \cite{LMM}, Theorem \ref{theorem C} admits an equivalent
restatement as an inverse problem for the cohomological equation. On taking Theorem
\ref{Gamma-defines-flow} into account, we formulate the corresponding result in terms of
thermostats.

\begin{Theorem}\label{theorem C'}
Let $(M,F,\lambda)$ be an Anosov thermostat on a closed oriented Finsler surface $M$, and let
${\bf F}$ be the infinitesimal generator of the thermostat flow $\phi_t$.
Suppose $\psi(x,v)=f(x)+\alpha_x(v)$, where $f$ is a smooth function and
$\alpha$ is a smooth $1$-form on $M$.
Then the cohomological equation ${\bf F}(u)=\psi$
has a solution $u\in C^{\infty}(SM)$ if and only if $f=0$ and the form $\alpha$ is exact.
\end{Theorem}

In case $\psi$ is a scalar function on $M$ (i.e., $\alpha=0$), this theorem can be generalized to
thermostats without conjugate points. We say that a thermostat has no conjugate points if the
family of $\lambda$-geodesics satisfies condition (A3), i.e., if the exponential map
\begin{equation}\label{exponential}
\exp^\lambda_x(v):=\gamma_{x,v}(|v|), \quad v\ne 0,
\end{equation}
is a local diffeomorphism for every $x$. Here $\gamma_{x,v}(t)$ is a unit-speed $\lambda$-geodesic
with $\gamma_{x,v}(0)=x$ and $\dot\gamma_{x,v}(0)=v/|v|$.

\begin{Theorem}\label{theorem E}
Let $(M,F,\lambda)$ be a thermostat without conjugate points on a closed oriented
Finsler surface $M$.
Suppose $\psi(x,v)=f(x)$, where $f$ is a smooth function on $M$.
Then the cohomological equation ${\bf F}(u)=\psi$
has a solution $u\in C^{\infty}(SM)$ if and only if $f=0$.
\end{Theorem}

Note that if a flow is Anosov, then there are no conjugate points (see Subsection~\ref{4.1}).
There are several interesting examples of geodesic flows without conjugate
points which are not Anosov and have regions of positive curvature (see \cite{BBB,DPcohomag}).
Examples of magnetic flows without conjugate points which are not Anosov
are also given in \cite{DPcohomag}.

Theorem \ref{theorem A}, too, is proved by a reduction to an inverse problem
for the cohomological equation
on a thermostat. The reduction follows the arguments in \cite{DInt} and is based on
the previous observations by V.~A.~Sharafutdinov in \cite{Shar1}.

In all the cases, the cohomological equation is analyzed by means of Pestov type identities that
we derive in each case.

\subsection{Structure of the paper} The organization of the paper is as follows.
In Section~2 we combine certain preliminaries concerning thermostats on Finsler surfaces, as
well as derive differential and integral Pestov type identities.
Section~3 is devoted to the proof of Theorem~\ref{theorem A}.
Section~4 contains the proof of Theorem~\ref{theorem C}. Here we also give
sufficient conditions in terms of the Finsler metric and thermostat data for a thermostat flow
to be Anosov
(see Theorem~\ref{theorem D}). The closing Section~5 contains the proof of Theorem~\ref{theorem E}.


\section{Pestov identity}
\subsection{Canonical coframing}\label{cancof}
By \cite[Chapter 4]{BCS} for a given Finsler surface $(M,F)$ it is possible to define a canonical coframing $(\omega_1, \omega_2, \omega_3)$ on $SM$ that satisfies the following structural equations:
\begin{align}
& d\omega_1=-\omega_2 \wedge \omega_3,\label{dw1}\\
& d\omega_2=-\omega_3 \wedge (\omega_1-I\omega_2),\label{dw2}\\
& d\omega_3=-(K\omega_1 - J\omega_3) \wedge \omega_2.\label{dw3}
\end{align}
where $I$, $K$ and $J$ are smooth functions on $SM$. The function $I$ is called the main scalar of the structure. When the Finsler structure is Riemannian, $K$ is the Gaussian curvature.

Consider the vector fields $(X,H,V)$ dual to $(\omega_1,\omega_2,\omega_3)$. As a consequence of (\ref{dw1}--\ref{dw3}) they satisfy the commutation relations
\begin{equation}\label{cmr1}
\begin{aligned}
&[V,X]=H,\\
&[H,V]=X+IH+JV,\\
&[X,H]=KV.
\end{aligned}
\end{equation}

Let $\lambda$ be a smooth function on $SM$ and let ${\bf F}=X+\lambda V$
be the generating vector field of the thermostat $(M,F,\lambda)$. From \eqref{cmr1} we obtain:
\begin{equation}\label{cmr2}
\begin{aligned}
& [V,{\bf F}]=H+V(\lambda)V,\\
& [H,V]={\bf F}+IH+(J-\lambda)V,\\
& [{\bf F},H]=\{K-H(\lambda)-\lambda J+\lambda^2\}V-\lambda {\bf F}-\lambda IH.
\end{aligned}
\end{equation}

\subsection{Pestov identity}
\begin{Lemma}[Pestov identity]\label{pestov}For every smooth function $u:SM \to \mathbb{R}$ we have
\begin{align*}
2Hu \cdot V{\bf F}u = ({\bf F}u)^2+(Hu)^2-(K-H(\lambda)-\lambda J+\lambda^2)(Vu)^2\\
+{\bf F}(Hu \cdot Vu)-H(Vu \cdot {\bf F}u)+V(Hu \cdot {\bf F}u)\\
+{\bf F}u \cdot (IHu+JVu)+Hu \cdot Vu \cdot (\lambda I+V(\lambda)).
\end{align*}
\end{Lemma}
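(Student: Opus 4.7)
The strategy is a direct though delicate computation that systematically uses the three commutation relations \eqref{cmr2} together with the Leibniz rule for derivations. The plan is to start with $Hu\cdot V{\bf F}u$ and, one commutator at a time, transform it into the required sum of squared first-order terms, three divergence terms, and lower-order corrections; the factor $2$ on the left will appear naturally at the very end, when a new copy of $V{\bf F}u\cdot Hu$ is transposed across the equality.

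First, I would use $[V,{\bf F}] = H + V(\lambda)V$ to rewrite $V{\bf F}u = {\bf F}Vu + Hu + V(\lambda)Vu$; multiplying by $Hu$ immediately pulls out the terms $(Hu)^2$ and $V(\lambda)Hu\cdot Vu$. Next, I would apply the Leibniz identity $Hu\cdot{\bf F}Vu = {\bf F}(Hu\cdot Vu) - {\bf F}Hu\cdot Vu$ to produce the divergence term ${\bf F}(Hu\cdot Vu)$, and invoke $[{\bf F},H] = \{K-H(\lambda)-\lambda J+\lambda^2\}V - \lambda{\bf F} - \lambda IH$ to swap ${\bf F}Hu$ for $H{\bf F}u$. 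This introduces $-\{K-H(\lambda)-\lambda J+\lambda^2\}(Vu)^2$, $\lambda{\bf F}u\cdot Vu$, and $\lambda I\,Hu\cdot Vu$.

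Repeating the Leibniz trick on $H{\bf F}u\cdot Vu = H({\bf F}u\cdot Vu) - {\bf F}u\cdot HVu$ produces the second divergence $H(Vu\cdot{\bf F}u)$, and applying $[H,V] = {\bf F}+IH+(J-\lambda)V$ swaps $HVu$ for $VHu$. This yields $({\bf F}u)^2$, $I{\bf F}u\cdot Hu$, and $(J-\lambda){\bf F}u\cdot Vu$; combined with the previous $\lambda{\bf F}u\cdot Vu$ this collapses to $J{\bf F}u\cdot Vu$, so together with $I{\bf F}u\cdot Hu$ it gives ${\bf F}u\cdot(IHu+JVu)$. Finally, one last Leibniz step writes ${\bf F}u\cdot VHu = V(Hu\cdot{\bf F}u) - V{\bf F}u\cdot Hu$. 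The first piece supplies the third divergence term $V(Hu\cdot{\bf F}u)$, and the second piece, being equal to $Hu\cdot V{\bf F}u$, can be transposed to the left-hand side to yield the required factor of $2$.

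The main difficulty is not conceptual but combinatorial: the right-hand side contains eight distinct terms, each of which must appear at the correct stage with the correct sign. In particular, the coefficient of $(Vu)^2$ must come out with multiplicity one, which forces the scheme above in which each of the three commutators is used exactly once; moreover, the terms $\lambda{\bf F}u\cdot Vu$ from $[{\bf F},H]$ and $(J-\lambda){\bf F}u\cdot Vu$ from $[H,V]$ must telescope precisely into $J{\bf F}u\cdot Vu$, and the stray terms $V(\lambda)Hu\cdot Vu$ and $\lambda I\,Hu\cdot Vu$ must combine into the single correction $(\lambda I+V(\lambda))Hu\cdot Vu$. These cancellations are not a coincidence: they reflect the fact that ${\bf F}=X+\lambda V$ is the correct generator of the thermostat, and any looser bookkeeping of $\lambda$ would fail to close the identity.
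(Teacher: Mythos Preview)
Your proposal is correct and is essentially the same computation as the paper's proof: both use each of the three brackets in \eqref{cmr2} exactly once together with three Leibniz substitutions, and the same cancellations $(J-\lambda)+\lambda=J$ and $V(\lambda)+\lambda I$ occur. The only cosmetic difference is that the paper opens by writing $2Hu\cdot V{\bf F}u - V(Hu\cdot{\bf F}u)=Hu\cdot V{\bf F}u - VHu\cdot{\bf F}u$ and then expands, whereas you start from a single copy of $Hu\cdot V{\bf F}u$ and recover the second copy at the end; the intermediate terms are otherwise identical.
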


\begin{proof}
Using the commutation formulas, we deduce:
\begin{align*}
2Hu \cdot V{\bf F}u-V&(Hu \cdot {\bf F}u)=Hu \cdot V{\bf F}u-VHu \cdot {\bf F}u\\
&=Hu \cdot({\bf F}Vu+[V,{\bf F}]u)-{\bf F}u \cdot(HVu+[V,H]u)\\
&=Hu \cdot({\bf F}Vu+Hu+V(\lambda)Vu)\\
&\quad-{\bf F}u \cdot(HVu-{\bf F}u-IHu-(J-\lambda)Vu)\\
&=(Hu)^2+({\bf F}u)^2+Hu \cdot {\bf F}Vu-HVu \cdot {\bf F}u+I{\bf F}u \cdot Hu\\
&\quad+(J-\lambda){\bf F}u \cdot Vu+V(\lambda)Hu \cdot Vu\\
&=(Hu)^2+({\bf F}u)^2+{\bf F}(Hu\cdot Vu)-H(Vu\cdot {\bf F}u)-[{\bf F},H]u\cdot Vu\\
&\quad+I{\bf F}u\cdot Hu+(J-\lambda){\bf F}u\cdot Vu+V(\lambda)Hu\cdot Vu\\
&=(Hu)^2+({\bf F}u)^2+{\bf F}(Hu\cdot Vu)-H(Vu\cdot {\bf F}u)\\
&\quad+(-(K-H(\lambda)-\lambda J+\lambda^2)Vu+\lambda {\bf F}u+\lambda IHu)\cdot Vu\\
&\quad+I{\bf F}u\cdot Hu+(J-\lambda){\bf F}u\cdot Vu+V(\lambda)Hu\cdot Vu\\
&=(Hu)^2+({\bf F}u)^2+{\bf F}(Hu\cdot Vu)-H(Vu\cdot {\bf F}u)\\
&\quad-(K-H(\lambda)-\lambda J+\lambda^2)(Vu)^2+I{\bf F}u\cdot Hu+J{\bf F}u\cdot Vu\\
&\quad+(\lambda I+V(\lambda))Hu\cdot Vu
\end{align*}
which is equivalent to the Pestov identity.
\end{proof}

For $G$ a vector field and $\Theta$ a differential form, Cartan's formula for
the Lie derivative reads:
$$
\mathcal L_{G}\Theta=d(i_{G}\Theta)+i_{G}d\Theta.
$$

Now let $\Theta:=\omega_1\wedge\omega_2\wedge\omega_3$. This volume form gives rise to the
Liouville measure $d\mu$ of $SM$.

\begin{Lemma}\label{lie}
We have:
\begin{align}
&\mathcal L_{{\bf F}}\Theta=(\lambda I+V(\lambda))\Theta, \label{lie1}\\
&\mathcal L_{H}\Theta=-J\Theta, \label{lie2}\\
&\mathcal L_{V}\Theta=I\Theta. \label{lie3}
\end{align}
\end{Lemma}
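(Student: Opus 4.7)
The strategy is to apply Cartan's magic formula $L_Y\Theta = d(i_Y\Theta) + i_Y d\Theta$ to each of the three vector fields $\mathbf{F}$, $H$, $V$. Since $\Theta = \omega_1 \wedge \omega_2 \wedge \omega_3$ is a top-degree form on the three-dimensional manifold $SN$, we have $d\Theta = 0$, so the formula collapses to $L_Y\Theta = d(i_Y\Theta)$. Because $(X,H,V)$ is the coframe dual to $(\omega_1,\omega_2,\omega_3)$, the interior products are immediate: $i_V\Theta = \omega_1 \wedge \omega_2$, $i_H\Theta = -\omega_1 \wedge \omega_3$, and $i_X\Theta = \omega_2 \wedge \omega_3$. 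The remaining work is to expand the exterior derivatives of these 2-forms using the structure equations (\ref{dw1})--(\ref{dw3}) and simplify.

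For (\ref{lie3}) and (\ref{lie2}), I would apply the Leibniz rule and substitute (\ref{dw1})--(\ref{dw3}). Most terms vanish because they contain a repeated factor $\omega_i \wedge \omega_i$; what survives in each case is a multiple of $\omega_1 \wedge \omega_2 \wedge \omega_3 = \Theta$. The coefficient that remains is $I$ for $V$ (contributed by the $I\omega_2$ summand of $d\omega_2$) and $-J$ for $H$ (contributed by the $J\omega_3$ summand of $d\omega_3$); the curvature $K$ and the other occurrences of $I$ are wedged against a repeated factor and disappear.

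The only step with substance is $L_{\mathbf{F}}\Theta$, where $\mathbf{F} = X + \lambda V$. By $\mathbb{R}$-linearity of the Lie derivative in the vector-field slot, $L_{\mathbf{F}}\Theta = L_X\Theta + L_{\lambda V}\Theta$. A short calculation gives $L_X\Theta = d(\omega_2 \wedge \omega_3) = 0$ since every term in the expansion acquires a repeated $\omega_i$ factor. For the second piece I would write $L_{\lambda V}\Theta = d(\lambda\, \omega_1 \wedge \omega_2) = d\lambda \wedge \omega_1 \wedge \omega_2 + \lambda\, d(\omega_1 \wedge \omega_2)$, where the second summand is $\lambda L_V\Theta = \lambda I\Theta$ from the previous step. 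The key observation for the first summand is the coframe expansion $d\lambda = X(\lambda)\omega_1 + H(\lambda)\omega_2 + V(\lambda)\omega_3$; after wedging with $\omega_1 \wedge \omega_2$ only the $V(\lambda)\omega_3$ term survives, yielding $V(\lambda)\Theta$. Summing gives $(\lambda I + V(\lambda))\Theta$, which is (\ref{lie1}).

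The main obstacle is purely bookkeeping: one must track signs carefully and recognize at each step which wedge products vanish by repetition of a factor. No geometric or analytic input beyond the structure equations (\ref{dw1})--(\ref{dw3}) and the duality of the coframing is needed, so the proof should fit in half a page of routine calculation.
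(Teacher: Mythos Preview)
Your proposal is correct and follows essentially the same approach as the paper: both use Cartan's formula $L_Y\Theta=d(i_Y\Theta)$, split $\mathbf{F}=X+\lambda V$, verify $L_X\Theta=0$ directly from the structure equations, and then compute $L_{\lambda V}\Theta=d(\lambda\,\omega_1\wedge\omega_2)$ (the paper writes this as $-d(\lambda\,\omega_2\wedge\omega_1)$, which is the same thing). Your version is slightly more explicit in expanding $d\lambda$ in the coframe, but there is no substantive difference.
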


\begin{proof}Using equations (\ref{dw1}--\ref{dw3})
$$
\mathcal L_{X}\Theta
=d(i_{X}\Theta)=d(\omega_2\wedge\omega_3)=d\omega_2\wedge\omega_3-\omega_2\wedge d\omega_3=0.
$$
Since ${\bf F}=X+\lambda V$, we get
\[\mathcal L_{{\bf F}}\Theta=\mathcal L_{X}\Theta+\mathcal L_{\lambda V}\Theta=d(i_{\lambda V}\Theta)=-d(\lambda \omega_2\wedge\omega_1)=(\lambda I+V(\lambda))\Theta.\]
Similarly, $\mathcal L_{H}\Theta=-J\Theta$, $\mathcal L_{V}\Theta=I\Theta$.
\end{proof}

\subsection{Pestov integral identity}
Integrate the Pestov identity over $SM$ against the Liouville measure $d\mu$
by making use of the Stokes Theorem and (\ref{lie1}--\ref{lie3}) to get
\begin{align*}
\int_{SM}2Hu \cdot V{\bf F}u\,d\mu&=\int_{SM}({\bf F}u)^2\,d\mu+\int_{SM}(Hu)^2\,d\mu\\
&-\int_{SM}\{K-H(\lambda)-\lambda J+\lambda^2\}(Vu)^2\,d\mu\\
&+\int_{\p(SM)}\{(Hu\cdot Vu)i_{\bf F}\Theta+({\bf F}u\cdot Hu)i_V\Theta-({\bf F}u\cdot Vu)i_H\Theta\}.
\end{align*}

Since $i_V \Theta=\omega_1\wedge\omega_2$ vanishes when restricted to $\p(SM)$, we have
$$
\int_{\p(SM)}({\bf F}u\cdot Hu)i_V\Theta=0.
$$
So we get
\begin{equation}
\begin{aligned}
\int_{SM}2Hu \cdot V{\bf F}u\,d\mu&=\int_{SM}({\bf F}u)^2\,d\mu+\int_{SM}(Hu)^2\,d\mu\\
&-\int_{SM}\{K-H(\lambda)-\lambda J+\lambda^2\}(Vu)^2\,d\mu\\
&+\int_{\p(SM)}\{(Hu\cdot Vu)i_{\bf F}\Theta-({\bf F}u\cdot Vu)i_H\Theta\}.
\end{aligned}\label{int-id1}
\end{equation}

By commutation relations, we have
\[{\bf F}Vu=V{\bf F}u-Hu-V(\lambda)Vu.\]
Therefore,
\begin{align*}
({\bf F}Vu)^2&=(V{\bf F}u)^2+(Hu)^2+(V(\lambda)Vu)^2-2V{\bf F}u\cdot Hu+2Hu\cdot V(\lambda)Vu\\
&-2V{\bf F}u\cdot V(\lambda)Vu\\
&=(V{\bf F}u)^2+(Hu)^2+(V(\lambda)Vu)^2-2V{\bf F}u\cdot Hu+2Hu\cdot V(\lambda)Vu\\
&-2V{\bf F}u\cdot V(\lambda)Vu+2V(\lambda){\bf F}Vu\cdot Vu-2V(\lambda){\bf F}Vu\cdot Vu\\
&=(V{\bf F}u)^2+(Hu)^2+(V(\lambda)Vu)^2-2V{\bf F}u\cdot Hu+2Hu\cdot V(\lambda)Vu\\
&-2V(\lambda)[V,{\bf F}] Vu-2V(\lambda){\bf F}Vu\cdot Vu\\
&=(V{\bf F}u)^2+(Hu)^2-(V(\lambda)Vu)^2-2V{\bf F}u\cdot Hu-2V(\lambda){\bf F}Vu\cdot Vu.
\end{align*}
Since
\[-2V(\lambda){\bf F}Vu\cdot Vu=-{\bf F}(V(\lambda)(Vu)^2)+(Vu)^2 {\bf F}V(\lambda)\]
we obtain:
\begin{equation}\label{dif-eq}
\begin{aligned}
({\bf F}Vu)^2&=(V{\bf F}u)^2+(Hu)^2-(V(\lambda)Vu)^2-2V{\bf F}u\cdot Hu\\
&-{\bf F}(V(\lambda)(Vu)^2)+(Vu)^2 {\bf F}V(\lambda).
\end{aligned}
\end{equation}

Integrating it over $SM$ we get
\begin{equation}\label{int-id2}
\begin{aligned}
\int_{SM}2Hu\cdot V{\bf F}u\,d\mu&
=\int_{SM}(V{\bf F}u)^2\,d\mu+\int_{SM}(Hu)^2\,d\mu-\int_{SM}({\bf F}Vu)^2\,d\mu\\
&+\int_{SM}\{\mathbf FV(\lambda)+\lambda IV(\lambda)\}(Vu)^2\,d\mu\\
&-\int_{\p(SM)}V(\lambda)(Vu)^2i_{\bf F}\Theta,
\end{aligned}
\end{equation}
since by the Stokes Theorem and (\ref{lie1})

\begin{align*}
-\int_{SM}{\bf F}(V(\lambda)(Vu)^2)\Theta
=&\int_{SM}\lambda IV(\lambda)(Vu)^2\Theta+\int_{SM}(V(\lambda))^2(Vu)^2\Theta\\
&-\int_{\p(SM)}V(\lambda)(Vu)^2i_{\bf F}\Theta.
\end{align*}

Combining (\ref{int-id1}) and (\ref{int-id2}), we come to the final integral identity:

\begin{Theorem}[Pestov integral identity]\label{1st-int-id}
\begin{multline*}
\int_{SM}({\bf F}Vu)^2\,d\mu-\int_{SM}\mathbb{K}(Vu)^2\,d\mu+\int_{\p(SM)}\omega(u)\\
=\int_{SM}(V{\bf F}u)^2\,d\mu-\int_{SM}({\bf F}u)^2\,d\mu,
\end{multline*}
where
\begin{equation}\label{def-omega}
\omega(u):=\{(Hu\cdot Vu)+V(\lambda)(Vu)^2\}i_{\bf F}\Theta-({\bf F}u\cdot Vu)i_H\Theta,
\end{equation}
and $\mathbb{K}:=K-H(\lambda)-\lambda J+\lambda^2+{\bf F}V(\lambda)+\lambda IV(\lambda)$.
\end{Theorem}
\medskip


\section{Proof of Theorem \ref{theorem A}}

As mentioned, Theorem \ref{theorem A} is proved by reduction to the cohomological equation,
followed by the analysis of the latter by means of Pestov type identities. The reduction is
performed in Subsections \ref{3.1} and \ref{3.2} and is based on observations in \cite{Shar1},
also used in \cite{DInt}, which we formulate below in Theorem~\ref{zero-bound-theorem} and Proposition
\ref{shar}. Subsection \ref{3.3} is devoted to adaptation of the Pestov integral identity (Theorem
\ref{1st-int-id}) to the case under study. In Subsection \ref{3.4} we derive one more integral
identity which we use to prove the main result. The proof of Theorem \ref{theorem A} is completed
in Subsection \ref{3.5}.

\subsection{Preparation}\label{3.1}
By Theorem \ref{Gamma-defines-flow} we can assume that \begin{equation}\label{x-ray=0}
\int_{\gamma}\psi(\gamma,\dot\gamma)\,dt=0
\end{equation}
for all $\lambda$-geodesics $\gamma$ with endpoints on $\p M$. The aim of this section is to prove
the next theorem. \begin{Theorem}\label{zero-bound-theorem}Let $(M,F,\lambda)$ be a nontrapping
thermostat without conjugate points. Suppose $\psi(x,v)$ is a smooth function on $SM$ and
\eqref{x-ray=0} holds for every $\lambda$-geodesic $\gamma$ with endpoints on $\p M$, then
\begin{equation}\label{zero-bound0}
\psi|_{S(\p M)}=0.
\end{equation}
\end{Theorem}
\begin{proof}
First of all, we extend $\psi$ to a positively homogeneous function
of degree zero on $TM\setminus\{0\}$.

Fix $x\in\partial M$ and $v \in S_x(\partial M)$. Let $n\in S_x M$ be an inward unit vector at
$x$. For $\varepsilon>0$ put $v_\varepsilon=v+\varepsilon n$ and consider the $\lambda$-geodesic
$\gamma_\varepsilon=\gamma_{x,v_\varepsilon}$. Let $\tau_\varepsilon$ is the first time at which
$\gamma_{\varepsilon}$ meets the boundary, $\gamma_{\varepsilon}(\tau_\varepsilon)\in \partial M$.
So $\gamma_\varepsilon:[0,\tau_\varepsilon]\to M$, $\gamma_\varepsilon(0)\in\partial M$,
$y_\varepsilon:=\gamma_\varepsilon(\tau_\varepsilon)\in\partial M$, and $\gamma_\varepsilon(t)\in
M^\text{int}$ for $0<t<\tau_\varepsilon$. We separately consider two possible cases. First, there
is a sequence $0<\varepsilon_k\to 0$ such that $\tau_{\varepsilon_k}\to 0$ as $k\to\infty$.
Second, there is $\tau_0>0$ such that $\tau_\varepsilon\geq \tau_0>0$ for all
$0<\varepsilon\leq\varepsilon_0$.
In the first case, assume \eqref{zero-bound0} fails; for definiteness,
$$
\psi(x,v)>0.
$$
For $k$ large enough, the points $(\gamma_{\varepsilon_k}(t),\dot{\gamma}_{\varepsilon_k}(t))$
belong to any prescribed neighbourhood of $(x,v)$ for all $t\in[0,\tau_{\varepsilon_k}]$.
Therefore, the latter inequality implies that the integrand in
$$
\int^{\tau_{\varepsilon_k}}_0 \psi(\gamma_{\varepsilon_k}(t),\dot{\gamma}_{\varepsilon_k}(t))\,dt
$$
is strictly positive on $(0,\tau_{\varepsilon_k})$. Hence, this integral is strictly positive,
which contradicts the hypothesis of the theorem.

Now, we consider the second case. Fix $\varepsilon\in(0,\varepsilon_0)$
and let $s\mapsto x_s\in \partial M$ $(0\leq s<\delta)$ be a parametrization of $\partial M$ near
$x$ such that $x_0=x$ and $\frac{dx_s}{ds}\big|_{s=0}=v$.

We first assume that $\gamma_\varepsilon$ meets $\partial M$ at
$y_\varepsilon=\gamma_\varepsilon(\tau_\varepsilon)$ transversally.
Then for $s$ small enough there is a unique
$\lambda$-geodesic $\gamma_{\varepsilon,s}$ from $x_s$ to $y_\varepsilon$ in $M$.
We choose a parametrization of $\gamma_{\varepsilon,s}$ so as to have
$\gamma_{\varepsilon,s}:[-\tau_{\varepsilon,s},0]\to M$,
$\gamma_{\varepsilon,s}(-\tau_{\varepsilon,s})=x_s$, $\gamma_{\varepsilon,s}(0)=y_{\varepsilon}$.
Moreover, $\tau_{\varepsilon,s}$ depends smoothly on $s$ and $ \gamma_{\varepsilon,s}(t)$ depends
smoothly on $(s,t)$.
Henceforth we accordingly shift a parameter on $\gamma_\varepsilon$ so that
$\gamma_\varepsilon=\gamma_{\varepsilon,0}$.

By hypothesis,
$$
\int_{-\tau_{\varepsilon,s}}^0 \psi(\gamma_{\varepsilon,s}(t),\dot\gamma_{\varepsilon,s}(t))\,dt=0.
$$
Taking the derivative with respect to $s$ at $s=0$, we get
\begin{multline}\label{derive}
\psi(x,v_\varepsilon/|v_\varepsilon|)\frac{d\tau_{\varepsilon,s}}{ds}\Big|_{s=0}
+\int_{-\tau_{\varepsilon}}^0
\left\{\frac{\partial \psi}{\partial x^k}(\gamma_\varepsilon(t),\dot\gamma_{\varepsilon}(t)) J_\varepsilon^k(t)\right.
\\+\left.\frac{\partial \psi}{\partial v^k}(\gamma_\varepsilon(t),\dot\gamma_{\varepsilon}(t)) \frac{DJ_\varepsilon^k(t)}{dt}\right\}\,dt=0,
\end{multline}
where $J_\varepsilon(t)=\frac{\partial\gamma_{\varepsilon,s}(t)}{\partial s}\big|_{s=0}$
is the variation (Jacobi) field along $\gamma_{\varepsilon}$.

Differentiating the identity $\gamma_{\varepsilon,s}(-\tau_{\varepsilon,s})=x_s$ with respect to
$s$, at $s=0$ we get
$$
J_\varepsilon(-\tau_{\varepsilon})-\dot\gamma_\varepsilon(-\tau_\varepsilon)\frac{d\tau_{\varepsilon,s}}{ds}\Big|_{s=0}=v.
$$

On putting
$$
A_\varepsilon=-\frac{d\tau_{\varepsilon,s}}{ds}\Big|_{s=0}
$$
and using the equality 
$\dot\gamma_\varepsilon(-\tau_\varepsilon)=(v+\varepsilon n)/|v+\varepsilon n|
=v+O(\varepsilon)$, we get
\begin{equation*}
J_\varepsilon(-\tau_{\varepsilon})=v-A_\varepsilon\dot\gamma_\varepsilon(-\tau_\varepsilon)
=(1-A_\varepsilon)\dot\gamma_\varepsilon(-\tau_\varepsilon)+O(\varepsilon).
\end{equation*}

Hence, $J_\varepsilon$ has the following boundary conditions:
\begin{equation}\label{J-shift}
J_\varepsilon(-\tau_{\varepsilon})=(1-A_\varepsilon)\dot\gamma_\varepsilon(-\tau_\varepsilon)+O(\varepsilon) ,
\quad J_\varepsilon(0)=0.
\end{equation}

As soon as a Jacobi field depends linearly on boundary conditions, from \eqref{J-shift} we deduce
\begin{equation}\label{jacob-eta}
J_\varepsilon(t)=-\frac{1-A_\varepsilon}{\tau_\varepsilon}t\dot\gamma_\varepsilon(t)+\tilde J_{\varepsilon}(t),
\end{equation}
where $\tilde J_{\varepsilon}$ is a Jacobi field along $\gamma_\varepsilon$ with boundary conditions
\begin{equation}\label{eta-init}
\tilde J_{\varepsilon}(-\tau_\varepsilon)=O(\varepsilon),\quad \tilde J_{\varepsilon}(0)=0.
\end{equation}
Using these conditions together with the Jacobi equation, which we do not derive here
since it can be done similarly as in \cite{DPrigid}, we conclude that
\begin{equation}\label{estimate}
\tilde J_\varepsilon=O(\varepsilon)
\end{equation}
in an appropriate $C^1$-norm.

To evaluate $A_\varepsilon$,
put $c_{\varepsilon}(s,t)=\gamma_{\varepsilon,s}((\tau_{\varepsilon,s}/\tau_\varepsilon)t)$.
Then $\tau_{\varepsilon,s}$ is the length of the curve
$c_\varepsilon(s,\cdot):[-\tau_\varepsilon,0]\to M$ and the variation field of $c_{\varepsilon}(s,t)$
is 
$$
V(t)=\frac{\partial c_{\varepsilon}(s,t)}{\partial s}\Big|_{s=0}
=\frac{\partial \gamma_{\varepsilon,s}((\tau_{\varepsilon,s}/\tau_\varepsilon)t)}{\partial s}\Big|_{s=0}
=J_\varepsilon(t)-\frac{A_\varepsilon}{\tau_\varepsilon} t\dot\gamma_\varepsilon(t)
=-\frac{t\dot\gamma_\varepsilon(t)}{\tau_\varepsilon}+O(\varepsilon).
$$ 
The first variation formula for length, together with \eqref{jacob-eta}, now gives
\begin{multline*}
\frac{d\tau_{\varepsilon,s}}{ds}\Big|_{s=0}
=-\int_{-\tau_{\varepsilon}}^0\left\langle V(t),
\frac{D\dot\gamma_{\varepsilon}}{dt}\right\rangle_{\dot\gamma_{\varepsilon}(t)}\,dt
-\langle V(-\tau_{\varepsilon}),\dot\gamma_\varepsilon(-\tau_{\varepsilon})\rangle_{\dot\gamma_{\varepsilon}(-\tau_\varepsilon)}
\\
=-\int_{-\tau_{\varepsilon}}^0\left\langle -\frac{t\dot\gamma_\varepsilon(t)}{\tau_\varepsilon},
\frac{D\dot\gamma_{\varepsilon}}{dt}\right\rangle_{\dot\gamma_{\varepsilon}(t)}\,dt
-\langle \dot\gamma_\varepsilon(-\tau_{\varepsilon}),\dot\gamma_\varepsilon(-\tau_{\varepsilon})\rangle_{\dot\gamma_{\varepsilon}(-\tau_\varepsilon)}
+O(\varepsilon)
\\
=-1+O(\varepsilon).
\end{multline*}

Hence $A_\varepsilon=1+O(\varepsilon)$. From
\eqref{jacob-eta} and \eqref{estimate} we then get:
$J_\varepsilon=O(\varepsilon)$
in an appropriate $C^1$-norm.
Therefore, using \eqref{derive} we conclude that
\begin{equation}\label{final-est}
\psi(x,v)=\psi(x,v_\varepsilon/|v_\varepsilon|)+O(\varepsilon)=O(\varepsilon).
\end{equation}

We recall that the above argument was carried out under the assumption that
$\gamma_\varepsilon$ intersects $\partial M$ transversally at the point
$y_\varepsilon=\gamma_\varepsilon(\tau_\varepsilon)$.

To get rid of this assumption, we invoke the following:

\begin{Proposition}\cite[Theorem 3.7, Ch. IX]{Saks}\label{saks} 
Given a plane set $R$, let $P$ be a subset of $R$ at every point of which the set $R$ has
an extreme tangent parallel to a fixed straight line $D$. Then the orthogonal projection of $P$
on the line at right angles to $D$ is of linear measure zero.
\end{Proposition}

Applying this proposition in polar coordinates related to the exponential map \eqref{exponential}
at $x$, we conclude that $\gamma_\varepsilon$ meets $\partial M$ transversally at $y_\varepsilon$
for almost every $\varepsilon$. Hence, \eqref{final-est} holds for all $\varepsilon$, which implies 
the claim of the theorem.
\end{proof}

\subsection{Reduction to the kinetic equation}\label{3.2}
If $\psi(x,v)=f(x)+\alpha_x(v)$ satisfies the conditions of Theorem \ref{theorem A}, we know 
from Theorem \ref{zero-bound-theorem} that $\psi(x,v)=0$ for $(x,v)\in S(\p M)$. As soon as 
$\psi(x,-v)=0$ too, we have $f(x)=0$ and $\alpha_x(v)=0$ for $x\in \partial M$ and $v\in T_x(\partial M)$.
The following obvious proposition can also be regarded as an easy consequence of \cite[Lemma~2.2]{Shar1}.

\begin{Proposition}\label{shar}
Let $g$ be a Riemannian metric on $M$ and let $n$ be the inward unit normal to $\partial M$ in $M$.
If $\theta$ is a smooth function on $\partial M$, then 
there is $h\in C^\infty(M)$ such that 
$h|_{\partial M}=0$ and $\frac{\partial h}{\partial n}\Big|_{\partial M}=\theta$.
\end{Proposition}

Considering any Riemannian metric $g$ on $M$ and taking $\theta=\alpha(n)$, 
it follows that the function $\tilde \psi=\psi-dh$ has the following property: the equality
\begin{equation}\label{zero-bound-normal}
\tilde\psi(x,v)=0
\end{equation}
holds for all $x\in \partial M$ and $v\in T_x M$.

Now we reduce the proof of Theorem \ref{theorem A} to an inverse problem for a kinetic equation. In view of \eqref{zero-bound0} and \eqref{zero-bound-normal}, we may henceforth assume that the function $\psi$ itself vanishes on the boundary:
\begin{equation}\label{zero-bound-final}
\psi|_{\p(SM)}=0.
\end{equation}

Further, without loss of generality, we assume that $M$ is a subset of a closed smooth surface
$U$. We extend $F$ to a Finsler metric on $U$ and extend $\lambda$ to a smooth function on $SU$,
thus obtaining a thermostat $(U,F,\lambda)$.
We extend $\psi$ from $SM$ to  $SU$ by zero, preserving the notation.
The boundary condition \eqref{zero-bound-final} guarantees that the so-obtained function $\psi$
is continuous on $U$ and belongs to the Sobolev space $H^1(SU)$ of
square-integrable functions with square-integrable first-order derivatives.

Since $(M,F,\lambda)$ is nontrapping, there is no complete $\lambda$-geodesic
which would be contained entirely in $M$. Therefore, for any $(x,v)\in SM$ there is a number $\tau(x,v)$ such that $\gamma_{x,v}(\tau(x,v))\notin M$. We define a function $u:SU \to {\mathbb R}$ to be
\begin{equation}\label{def-chi}
u(x,v)=\int^{\tau(x,v)}_0 \psi(\phi_t(x,v))\,dt.
\end{equation}

Note that the value of $u(x,v)$ is independent of the choice of $\tau(x,v)$. This follows from \eqref{x-ray=0} and the fact that $\psi$ vanishes on $SU\setminus SM^\text{int}$.

Call a point $(x,v)\in SM$ {\it regular} if the $\lambda$-geodesic $\gamma_{x,v}$ intersects $\p M$ transversally from either side, and if the open segment of $\gamma_{x,v}$ between the basepoint $x$ and the point of intersection lies entirely in $M^\text{int}$. We denote by $RM\subset SM$ the set of all regular points. It is clear that $RM$ is open in $SM$ and has full measure in $SM$.

\begin{Lemma}\label{chi-prop}
The function $u:SU\to \R$ has the following properties:
\begin{itemize}
\item[(i)] $u|_{S(U\setminus M)}=0$,

\item[(ii)] $u\in H^1(SU)\cap C(SU)\cap C^{\infty}(RM)$,

\item[(iii)] $u$ is $C^1$ smooth along the orbits of the thermostat flow $\phi$ and satisfies the following kinetic equation on $SU$:
\begin{equation}\label{kinetic-equation}
{\bf F}u(x,v)=-\psi(x,v).
\end{equation}
\end{itemize}
\end{Lemma}
\begin{proof}
Statement (i) is a direct consequence of \eqref{x-ray=0} and \eqref{def-chi}.

To prove (ii) take any point $(x,v)\in SU$. Since
$\gamma_{x,v}(\tau(x,v))\notin M$, we can choose a small one-dimensional subspace $\Phi$ in $U$ transversally intersecting $\gamma_{x,v}$ at the point $\gamma_{x,v}(\tau(x,v))$ and disjoint from M. Then there is a neighbourhood of $(x,v)$ in $SU$ such that for every $(x',v')$ in this neighbourhood the $\lambda$-geodesic $\gamma_{x',v'}$ will hit $\Phi$ at the time $\tilde\tau(x',v')$ smoothly depending on $(x',v')$ and such that $\tilde \tau(x,v)=\tau(x,v)$. For these $(x',v')$ we can therefore take $\tilde \tau(x',v')$ as $\tau(x',v')$ while defining $u$ by \eqref{def-chi}. So locally the lower limit of integration in \eqref{def-chi} can be chosen to be a smooth function. Since $\psi$ is continuous and lies in the Sobolev space $H^1(SU)$, this observation allows us to prove routinely that $u$ is continuous and belongs to $H^1(SU)$.

If $(x,v)\in RM$, then there is a neighbourhood of $(x,v)$ in $SU$ such that for all $(x',v')$ in this
neighbourhood the $\lambda$-geodesics $\gamma_{x',v'}$ intersect $\p M$ transversally from the same side as $\gamma_{x,v}$ so that
the interior of the segment between $x'$ and the point of intersection lies in $M^\text{int}$ for each of these $\lambda$-geodesics. Moreover, the parameter values $\tau(x',v')$ of the intersection
points are smooth functions of $(x',v')$ in this neighbourhood. Since $\psi$ is smooth in $SM$, we conclude that $u$ is smooth in the chosen neighbourhood of $(x,v)$ and therefore smooth on $RM$.

Finally, we give the proof of (iii). For $\varphi\in C^\infty (SM)$ and $(x,v)\in SM$ we have
\begin{equation}\label{def-F}
\mathbf F \varphi(x,v)=\frac{d}{dt}\varphi(\phi_t(x,v))\bigg|_{t=0}.
\end{equation}
By way of approximation, it is easy to see that \eqref{def-F} works equally well for the
functions $\varphi\in H^1(SU)\cap C(SU)$. To apply it to our function $u$, take $(x,v)\in SM$. Then $\gamma_{\phi_s(x,v)}(t)=\gamma_{x,v}(t+s)$ and we can
take $\tau(\phi_s(x,v))=\tau(x,v)-s$. By \eqref{def-chi}
$$
u(\phi_s(x,v))=\int^{\tau(\phi_s(x,v))}_0\psi(\phi_{t+s}(x,v))\,dt=\int^{\tau(x,v)}_s \psi(\phi_t(x,v))\,dt.
$$
Taking the derivative at $s = 0$ gives \eqref{kinetic-equation} and finishes the proof of the lemma.
\end{proof}

\subsection{Pestov integral identity}\label{3.3}
Let $D$ be a compact oriented two-dimensional submanifold of $U$ with boundary $\p D$ and $u: SD\to {\mathbb R}$ be a smooth function such that $u|_{\partial(SD)}=0$. By Theorem \ref{1st-int-id} we have
\begin{equation}\label{1st-int-id-b}
\int_{SD}({\bf F}Vu)^2\,d\mu-\int_{SD}\mathbb{K}(Vu)^2\,d\mu=\int_{SD}(V{\bf F}u)^2\,d\mu-\int_{SD}({\bf F}u)^2\,d\mu.
\end{equation}

\begin{Lemma}\label{1st-int-id-ns}
Let $D\subset U$ be a surface with boundary $\partial D$. Let a function $u: SD\to {\mathbb R}$ be such that $u\in H^1(SD)$, ${\bf F}u\in H^1(SD)$, $u$ is smooth in some neighbourhood of $\partial(SD)$ in $SD$, and $u|_{\partial(SD)}=0$. Then the integral identity (\ref{1st-int-id-b}) is valid for $u$.
\end{Lemma}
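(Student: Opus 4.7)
The plan is a density argument that extends the identity from smooth functions $u$ vanishing on $\partial(SD)$ (for which the derivation at the start of the subsection already works) to the Banach space
$$
X:=\{w\in H^1(SD):{\bf F}w\in H^1(SD)\}
$$
equipped with the graph norm $\|w\|_X^2:=\|w\|_{H^1(SD)}^2+\|{\bf F}w\|_{H^1(SD)}^2$. The first task is to check that each integral in (\ref{1st-int-id-b}) is well defined for $u\in X$. The terms $\int({\bf F}u)^2\,d\mu$ and $\int(V{\bf F}u)^2\,d\mu$ are finite because ${\bf F}u\in H^1(SD)$, while $\int\mathbb{K}(Vu)^2\,d\mu$ is finite since $Vu\in L^2(SD)$ and $\mathbb{K}$ is smooth. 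For the remaining $\int({\bf F}Vu)^2\,d\mu$, use the commutation relation from (\ref{cmr2}) to rewrite
$$
{\bf F}Vu=V{\bf F}u-Hu-V(\lambda)Vu,
$$
which exhibits ${\bf F}Vu$ as a sum of $L^2(SD)$ functions. Both sides of (\ref{1st-int-id-b}) thus define continuous quadratic functionals on $X$.

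Next, I would build a smooth approximation preserving the boundary condition. Since $u$ is smooth on some open neighborhood $U\supset\partial(SD)$, pick a cutoff $\chi\in C^\infty(SD;[0,1])$ equal to $1$ on a smaller neighborhood of $\partial(SD)$ and with $\mathrm{supp}\,\chi\subset U$, and split $u=\chi u+(1-\chi)u$. The first summand is smooth on $SD$ and vanishes on $\partial(SD)$; the second lies in $X$ with compact support in $SD\setminus\partial(SD)$, since ${\bf F}((1-\chi)u)=(1-\chi){\bf F}u-{\bf F}(\chi)u\in H^1(SD)$. Using a finite partition of unity and local charts on $SD$, mollify $(1-\chi)u$ to obtain $v_\varepsilon\in C^\infty_c(SD\setminus\partial(SD))$ and set $u_\varepsilon:=\chi u+v_\varepsilon\in C^\infty(SD)$, which satisfies $u_\varepsilon|_{\partial(SD)}=0$. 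Standard mollification gives $u_\varepsilon\to u$ in $H^1(SD)$. For the $\bf F$-direction, write
$$
{\bf F}v_\varepsilon=\rho_\varepsilon*{\bf F}((1-\chi)u)+[{\bf F},\rho_\varepsilon*]\bigl((1-\chi)u\bigr);
$$
the first term converges in $H^1(SD)$ to ${\bf F}((1-\chi)u)$, and Friedrichs' commutator lemma kills the second term in $L^2$ and, after differentiating through the commutator, in $H^1$. Hence $u_\varepsilon\to u$ in $X$. Applying (\ref{1st-int-id-b}) to each $u_\varepsilon$ and passing to the limit via continuity of the quadratic functionals finishes the proof.

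The main obstacle is upgrading the Friedrichs commutator bound from its classical $L^2$ form to $H^1$. This is handled by differentiating the commutator $[{\bf F},\rho_\varepsilon*]$ through a local coordinate derivative $D_j$: since $[D_j,\rho_\varepsilon*]=0$ and $[D_j,{\bf F}]$ is again a first-order differential operator with smooth coefficients, the resulting expression reduces to further commutators of the Friedrichs type, each of which can be controlled in $L^2$ using $u\in H^1(SD)$. Once that technical step is in hand, the rest of the argument is routine given the correct function space $X$.
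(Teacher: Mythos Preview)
Your proposal is correct and follows essentially the same approach as the paper: construct smooth approximants $u_k$ that agree with $u$ near $\partial(SD)$, converge to $u$ in $H^1(SD)$ with ${\bf F}u_k\to{\bf F}u$ in $H^1(SD)$, apply the identity to each $u_k$, and pass to the limit. The paper merely states this outline and defers the construction of the approximating sequence to \cite[Lemma~3.1]{DInt}; you have supplied the details (cutoff near the boundary, mollification of the compactly supported remainder, Friedrichs commutator estimates) that the paper omits.
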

\begin{proof}
We want to construct a sequence of smooth functions $u_k$ coinciding with $u$ in a neighbourhood of $\partial(SD)$ and such that $u_k\to u$ in $H^1(SD)$ and ${\bf F}u_k\to {\bf F}u$ in $H^1(SD)$ as $k\to\infty$. Then applying (\ref{1st-int-id}) to each function $u_k$ and passing to the limit as $k\to\infty$, we come to the desired conclusion.

A chart $(O,\varphi)$ of the manifold $SN$ is called {\it straightening} the vector field $\mathbf F$ if $\psi_*\mathbf F$ coincides with a coordinate vector field on the range $\psi(O)\subset \mathbb R^{2}$.
Cover $SD$ by an atlas consisting of sufficiently small straightening charts $(O_i,\varphi_i)$, so that the charts having nonempty intersection with $\p(SD)$ lie entirely in that neighbourhood of $\p(SD)$ where $u$ is smooth. Choose a partition of unity $\{\mu_i\}$ subordinate to this atlas. Then $u=\sum_i\mu_i u$, where each $\mu_i u$ has support in $O_i$.

Choose those indices $i$ for which $O_i$ is disjoint from $\p(SD)$.
Fix a nonnegative function $\kappa\in C^{\infty}_0(\mathbb R^{2})$ such that
$\int_{\mathbb R^{2}}\kappa\,dx=1$. Set $\kappa_{\delta}(x)=\kappa(x/{\delta})/{\delta^{2}}$
for $\delta>0$. The function $\tilde{u}^{\delta}_i=((\mu_iu)\circ\varphi_i^{-1})*\kappa_{\delta}$
is $C^{\infty}$-smooth on $\mathbb R^{2}$ and $\supp\tilde{u}^{\delta}_i\subset \varphi_i(O_i)$
for $\delta>0$ small enough. The difference $\tilde{u}^{\delta}_i-(\mu_iu)\circ\varphi_i^{-1}$
tends to zero uniformly on $\mathbb R^{2}$ as $\delta\to0$.
Lift the function $\tilde{u}^{\delta}_i$ to $O_i$ by $\varphi_i$,
i.e. consider the function $\tilde{u}^{\delta}_i\circ\varphi_i$
and denote it again by $\tilde{u}^{\delta}_i$.
For the other indices, take the functions $\mu_iu$ themselves for $\tilde{u}^{\delta}_i$.
Then the sum $u_{\delta}=\sum_i\tilde{u}^{\delta}_i$
is the sought approximation to $u$.
\end{proof}

\subsection{Riccati equation and second integral identity}\label{3.4}
For $(x,v)\in SM$, define
$$
{\mathcal V}(x,v):=\ker d_{(x,v)}\pi,\quad\text{and}\quad E(x,v):={\mathcal V}(x,v)\oplus\mathbb{R} {\bf F}(x,v),
$$
where $d_{(x,v)}\pi$ is the differential of the natural projection $\pi:SM\to M$ and $\mathbf F$
is the infinitesimal generator of the thermostat flow.

\begin{Lemma}\label{r-exists}If  $\gamma:[0,T]\to M$ is a $\lambda$-geodesic, then
$$
d_{\dot{\gamma}(0)}\phi_t(E)\cap {\mathcal V}(\gamma(t),\dot{\gamma}(t))=\{0\}
$$
for every $t\in(0,T]$.
\end{Lemma}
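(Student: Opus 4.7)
The plan is to reduce the assertion to the hypothesis that the exponential map $\exp_{x}^{\lambda}$ is a local diffeomorphism. Set $(x,\xi)=(\gamma(0),\dot\gamma(0))$. Since $V(x,\xi)$ is vertical while $d\pi({\bf F}(x,\xi))=\xi\neq 0$, the pair $\{V(x,\xi),{\bf F}(x,\xi)\}$ is linearly independent in $T_{(x,\xi)}SM$, and as $E(x,\xi)$ is $2$-dimensional these two vectors form a basis of it. Hence every $\zeta\in E(x,\xi)$ can be written uniquely as $\zeta=\alpha\,V(x,\xi)+\beta\,{\bf F}(x,\xi)$, and it suffices to prove that $d\phi_{t}(\zeta)\in{\mathcal V}(\dot\gamma(t))$ forces $\alpha=\beta=0$, or equivalently that $d\pi(d\phi_{t}(\zeta))\neq 0$ whenever $(\alpha,\beta)\neq(0,0)$.

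The next step is to compute $d\pi(d\phi_{t}(\zeta))$ in closed form. Since ${\bf F}$ generates $\phi_{t}$, we have $d\phi_{t}({\bf F}(x,\xi))={\bf F}(\phi_{t}(x,\xi))$, which projects under $d\pi$ to $\dot\gamma(t)$. To handle the other summand, pick a smooth curve $s\mapsto\xi(s)\in S_{x}M$ with $\xi(0)=\xi$ and $\dot\xi(0)=V(x,\xi)$ (legitimate because $V(x,\xi)$ is tangent to the fibre $S_{x}M$ at $\xi$); then $d\phi_{t}(V(x,\xi))=\partial_{s}|_{s=0}\phi_{t}(x,\xi(s))$, whose projection under $d\pi$ is the Jacobi field
$$
J(t):=\partial_{s}|_{s=0}\gamma_{x,\xi(s)}(t),\qquad J(0)=0.
$$
Adding the two contributions gives $d\pi(d\phi_{t}(\zeta))=\alpha J(t)+\beta\dot\gamma(t)$.

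The heart of the argument is now to show that $J(t)$ and $\dot\gamma(t)$ are linearly independent in $T_{\gamma(t)}M$ for every $t\in(0,T]$. Under the canonical identification $T_{t\xi}(T_{x}M)\cong T_{x}M=\mathbb{R}\xi\oplus T_{\xi}(S_{x}M)$, a short Taylor expansion of $\exp_{x}^{\lambda}(t\xi+sw)$ in $s$, combined with the Finslerian relation $g_{\xi}(\xi,w)=0$ for $w\in T_{\xi}(S_{x}M)$, shows that $d\exp_{x}^{\lambda}|_{t\xi}$ sends $\xi$ to $\dot\gamma(t)$ and a representative of $V(x,\xi)$ in $T_{x}M$ to $(1/t)J(t)$. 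By the no-conjugate-points hypothesis, $\exp_{x}^{\lambda}$ is a local diffeomorphism, so $d\exp_{x}^{\lambda}|_{t\xi}$ is invertible, and the required linear independence is immediate. Therefore $\alpha J(t)+\beta\dot\gamma(t)=0$ forces $\alpha=\beta=0$, so $\zeta=0$, and the lemma follows. The only delicate point is the explicit identification of the image of a vertical direction under $d\exp_{x}^{\lambda}|_{t\xi}$ with the Jacobi field $J$ (up to the factor $1/t$), which is where the Finsler unit-sphere geometry must be handled slightly more carefully than in the Riemannian case.
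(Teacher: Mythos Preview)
Your proof is correct and follows essentially the same approach as the paper: both arguments identify $d\pi\big(d\phi_t(E(x,\xi))\big)$ with the image of $d_{t\xi}\exp_x^{\lambda}$ and then invoke the no-conjugate-points hypothesis to conclude that this image is all of $T_{\gamma(t)}M$. The paper compresses this into the single line ``it is straightforward that $\operatorname{image}(d_{t\xi}\exp^{\lambda}_x)=d_{\dot{\gamma}(t)}\pi(d_{\dot{\gamma}(0)}\phi_t(E))$,'' whereas you spell out the same identity explicitly by computing on the basis $\{{\bf F},V\}$ of $E$ and matching the two summands with $\dot\gamma(t)$ and $(1/t)J(t)$; your version is simply a more detailed execution of the paper's sketch.
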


\begin{proof}
Take $(x,v)\in SM$ and $t\in(0,T]$. From the definition of $\exp^{\lambda}$ it is straightforward that
$$\text{image}(d_{tv}\exp^{\lambda}_x)=d_{\dot{\gamma}(t)}\pi(d_{\dot{\gamma}(0)}\phi_t(E)).$$
By the absence of conjugate points, $d_{w}\exp^{\lambda}_x$ is a linear isomorphism for every $w\in T_x M$ at which $\exp^{\lambda}_x$ is defined, and the lemma follows.
\end{proof}

For $(x,v)\in SM$ there is $t_0$ such that $x_0=\gamma_{x,v}(t_0)\in U\setminus M$.
If $x_0$ is close enough to $M$, then $\gamma_{x_0,v_0}$ has no conjugate points either,
where $v_0=\dot{\gamma}_{x,v}(t_0)$. Then Lemma \ref{r-exists} implies that there is
a unique continuous function $r(t)=r(\phi_t(x_0,v_0))$ along the orbit of $(x_0,v_0)$ such that
$$
H(\phi_t(x_0,v_0))+r(t)V(\phi_t(x_0,v_0))\in d_{v_0}\phi_t(E).
$$
In this way we can define $r$ on the whole $SM$.
The so-obtained function $r$ is smooth on every orbit and $r\in L^{\infty}(SM)$.
Below we will need to use the fact that the function $r$ satisfies a Riccati type equation along the flow.

\begin{Lemma}\label{riccati}{\bf{(Riccati equation). }}The function r satisfies
$${\bf F}(r-V(\lambda))+r(\lambda I-V(\lambda)+r)+\mathbb{K}-\lambda IV(\lambda)=0.$$
\end{Lemma}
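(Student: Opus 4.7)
The plan is to exploit $\phi$-invariance of the two-plane distribution
$$
\mathcal{P}\bigl(\phi_t(v)\bigr) := d_v\phi_t\bigl(E(v)\bigr).
$$
By the defining property of $r$, $\mathcal{P}$ is spanned by $\mathbf{F}$ and $H + rV$ on the regular set $RM$; since $\phi_s^*\mathcal{P} = \mathcal{P}$, differentiating in $s$ at $s=0$ shows that $[\mathbf{F}, Y]$ is a section of $\mathcal{P}$ for every smooth section $Y$ of $\mathcal{P}$. Applied to $Y = H + rV$, this yields smooth functions $\mu, \nu$ with
$$
[\mathbf{F},\, H + rV] = \mu(H + rV) + \nu\,\mathbf{F}.
$$

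Next, I would compute the left-hand side directly from the commutation relations (\ref{cmr2}):
$$
[\mathbf{F},\,H + rV] = [\mathbf{F},H] + \mathbf{F}(r)\,V + r\,[\mathbf{F},V],
$$
and expand everything in the basis $(\mathbf{F}, H, V)$. Matching coefficients with $\mu(H+rV) + \nu\mathbf{F}$ produces three scalar equations: the $\mathbf{F}$-coefficient fixes $\nu = -\lambda$, the $H$-coefficient gives $\mu = -\lambda I - r$, and the $V$-coefficient, after inserting the value of $\mu$, yields the reduced Riccati-type equation
$$
\mathbf{F}(r) + r^2 + r\bigl(\lambda I - V(\lambda)\bigr) + \bigl(K - H(\lambda) - \lambda J + \lambda^2\bigr) = 0.
$$

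Finally, using the definition $\mathbb{K} = (K - H(\lambda) - \lambda J + \lambda^2) + \lambda I V(\lambda) + \mathbf{F} V(\lambda)$ together with $\mathbf{F}(r - V(\lambda)) = \mathbf{F}(r) - \mathbf{F}V(\lambda)$, a line of routine algebra rewrites the displayed equation in the form stated in the lemma.

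The main subtlety lies in the first step: one must be sure that $r$ is differentiable in directions transverse to the flow, so that $[\mathbf{F}, H + rV]$ makes sense as an honest vector field rather than merely a derivative along a single orbit. This is clear on the open dense regular set $RM$, where smoothness of $d\phi_t$ combined with the absence of conjugate points makes $\mathcal{P}$ a smooth two-plane distribution and $r$ a smooth function; the identity then extends to $SM$ by continuity along orbits.
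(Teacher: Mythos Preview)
Your proof is correct and follows essentially the same route as the paper: the paper sets $\xi(t):=d\phi_{-t}\bigl(H(t)+r(t)V(t)\bigr)$, differentiates at $t=0$ to obtain $\dot\xi(0)=[\mathbf F,H]+\mathbf F(r)V+r[\mathbf F,V]$, and then uses $\dot\xi(0)\in E$ together with $H=\xi(0)-rV$ to force the $V$-component to vanish, which is exactly your coefficient-matching argument recast along a single orbit. Because the paper's formulation is orbitwise, it needs only smoothness of $r$ along the flow, so the transverse-regularity issue you flag at the end does not actually arise.
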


\begin{proof}Fix $(x,v)\in SM$ and set
$$
\xi(t):=d\phi_{-t}(H(\phi_t(x,v))+r(\phi_t(x,v))V(\phi_t(x,v))).
$$
By the definition of $r$, $\xi(t)\in E(x,v)$ for all $t$. Differentiating with respect to $t$ and setting $t=0$ we obtain:
$$\dot{\xi}(0)=[{\bf F},H]+{\bf F}(r)V+r[{\bf F},V].$$
Using the commutation relations \eqref{cmr2} we have
$$\dot{\xi}(0)=-\lambda {\bf F}-\lambda I\xi(0)+\{K-H(\lambda)-\lambda J+\lambda^2+{\bf F}(r)-rV(\lambda)\}V.$$
Replacing $H$ by $\xi(0)-rV$ yields:
$$\dot{\xi}(0)+(r+\lambda I)\xi(0)-\lambda {\bf F}=\{K-H(\lambda)-\lambda J+\lambda^2+{\bf F}(r)+\lambda Ir-rV(\lambda)+r^2\}V.$$
Since $\dot{\xi}(0)+(r+\lambda I)\xi(0)-\lambda {\bf F}\in E$ we must have
$$K-H(\lambda)-\lambda J+\lambda^2+{\bf F}(r)+\lambda Ir-rV(\lambda)+r^2=0$$
which is the desired equation.
\end{proof}

For the proof of Theorem \ref{theorem A} we also need the following result.

\begin{Theorem}\label{2nd-int-id} Let $\varphi\to\mathbb R$
be a function vanishing on $\partial(SM)$ such that $\varphi\in C^{\infty}(RM)$. Then
\[\int_{SM}({\bf F}\varphi)^2\,d\mu-\int_{SM}\mathbb K \varphi^2\,d\mu=
\int_{SM}[{\bf F}(\varphi)-r\varphi+\varphi V(\lambda)]^2\,d\mu\geq 0.\]
Moreover,
\[\int_{SM}[{\bf F}(\varphi)-r\varphi+\varphi V(\lambda)]^2\,d\mu=0\]
if and only if $\varphi=0$ on $RM$.
\end{Theorem}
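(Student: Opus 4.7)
The plan is to prove the identity by completing the square on the right-hand side and then exploiting the Riccati equation of Lemma \ref{riccati} to reduce the cross-term. I would begin by expanding
\[
\bigl[{\bf F}\psi - (r-V(\lambda))\psi\bigr]^2 = ({\bf F}\psi)^2 - 2(r-V(\lambda))\psi\,{\bf F}\psi + (r-V(\lambda))^2 \psi^2
\]
and writing the middle factor as $2\psi\,{\bf F}\psi = {\bf F}(\psi^2)$. Because $\psi$ vanishes on $\partial(SM)$, the Stokes formula (\ref{stokes-g}) together with (\ref{lie1}) yields
\[
-\int_{SM}(r-V(\lambda))\,{\bf F}(\psi^2)\,d\mu = \int_{SM}\psi^2\bigl[{\bf F}(r-V(\lambda)) + (\lambda I + V(\lambda))(r-V(\lambda))\bigr]\,d\mu.
\]

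The second step is a purely algebraic simplification using Lemma \ref{riccati}. Substituting ${\bf F}(r-V(\lambda)) = -r(\lambda I - V(\lambda) + r) - \mathbb K + \lambda I V(\lambda)$ into the bracket and collecting terms, the $\lambda I r$ and $\lambda I V(\lambda)$ pieces cancel in pairs, leaving
\[
{\bf F}(r-V(\lambda)) + (\lambda I + V(\lambda))(r-V(\lambda)) = -(r-V(\lambda))^2 - \mathbb K.
\]
Feeding this back into the expansion cancels the $\int (r-V(\lambda))^2\psi^2\,d\mu$ contribution against the quadratic term from the square, producing exactly $\int_{SM}({\bf F}\psi)^2\,d\mu - \int_{SM}\mathbb K\psi^2\,d\mu$. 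Non-negativity of the right-hand side is automatic.

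For the equality case, vanishing of the right-hand integral forces ${\bf F}\psi - (r-V(\lambda))\psi = 0$ almost everywhere. On $RM$, where $\psi$ is smooth and $r$ is smooth along orbits, this is a genuine linear first-order ODE along each $\lambda$-geodesic. Every such orbit in $RM$ hits $\partial(SM)$ in finite time, where $\psi = 0$; Cauchy uniqueness for this linear ODE then propagates the zero value along the entire orbit, forcing $\psi \equiv 0$ on $RM$. The reverse implication is trivial since $RM$ has full measure.

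The main technical obstacle is justifying the integration by parts rigorously: $r$ lies only in $L^{\infty}(SM)$ and is smooth only along orbits, while $\psi$ is only smooth on $RM$. I would handle this by an approximation argument in the spirit of Lemma \ref{1st-int-id-ns}, replacing $\psi$ by a sequence of smooth compactly supported functions agreeing with $\psi$ near $\partial(SM)$ and converging to $\psi$ in a norm strong enough that both ${\bf F}(\psi_k^2)$ and $\psi_k^2$ converge against the $L^{\infty}$ weights $r - V(\lambda)$ and $(r-V(\lambda))^2 + \mathbb K$. Once Stokes is legitimate for the approximations, the Riccati-driven cancellation survives the limit and delivers the theorem.
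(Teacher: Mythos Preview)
Your proposal is correct and follows essentially the same route as the paper: expand the square, convert the cross term into ${\bf F}((r-V(\lambda))\psi^2)$ plus a Riccati remainder, integrate using (\ref{stokes-g}) and (\ref{lie1}) so the boundary term drops out, and then run the linear ODE argument along orbits for the equality case. Your grouping of $r-V(\lambda)$ as a single coefficient makes the algebra a bit tidier than the paper's six-term expansion, and you are explicit about the approximation needed to justify Stokes with $r\in L^\infty$, a point the paper passes over silently; otherwise the arguments coincide.
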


\begin{proof} Let us expand $[{\bf F}(\varphi)-r\varphi+\varphi V(\lambda)]^2$:
\begin{align*}
[{\bf F}(\varphi)-r\varphi+\varphi V(\lambda)]^2&=[{\bf F}(\varphi)]^2+\varphi^2 r^2+\varphi^2[V(\lambda)]^2\\
&-2{\bf F}(\varphi)\varphi r+2{\bf F}(\varphi)\varphi V(\lambda)-2\varphi^2 r V(\lambda).
\end{align*}
Using Lemma \ref{riccati}, we obtain:
\begin{align*}
[{\bf F}(\varphi)-r\varphi+\varphi V(\lambda)]^2&=[{\bf F}(\varphi)]^2-\mathbb K\varphi^2\\
&-{\bf F}((r-V(\lambda))\varphi^2)+\varphi^2[V(\lambda)]^2\\
&-\varphi^2 r[\lambda I+V(\lambda)]+\lambda IV(\lambda)\varphi^2.
\end{align*}
If we integrate the last equality with respect to the measure $\mu$,
we obtain as desired:
\[\int_{SM}({\bf F}\varphi)^2\,d\mu-\int_{SM}\mathbb K \varphi^2\,d\mu=
\int_{SM}[{\bf F}(\varphi)-r\varphi+\varphi V(\lambda)]^2\,d\mu,\]
since by the Stokes Theorem and (\ref{lie1}), we have
\begin{multline*}
\int_{SM}{\bf F}((r-V(\lambda))\varphi^2)\,d\mu
=\int_{SM}\{\varphi^2[V(\lambda)]^2-\varphi^2 r[\lambda I+V(\lambda)]+\lambda IV(\lambda)\varphi^2\}\,d\mu
\\
+\int_{\p(SM)}(r-V(\lambda))\varphi^2\,i_{\bf F}\Theta
\end{multline*}
while the last integral vanishes due to the boundary condition.

Suppose now
$$
\int_{SM}[{\bf F}(\varphi)-r\varphi+\varphi V(\lambda)]^2\,d\mu=0,
$$
which implies
\[{\bf F}(\varphi)-r\varphi+\varphi V(\lambda)=0\]
on $RM$. This means that, on almost every orbit, the function $\varphi$ satisfies a homogeneous first-order ordinary differential equation with zero boundary data. This surely implies that $\varphi\equiv 0$ on such an orbit, which yelds $\varphi\equiv 0$ on $RM$.
\end{proof}

\subsection{End of the proof of Theorem \ref{theorem A}}\label{3.5}
Let $W\subset N$ be a collar neighbourhood of $\partial M$ in $N$.
This means that there is a diffeomorphism $\Psi: \partial M \times (-1,1)\to W$
such that the restriction $\Psi|_{\partial M \times \{0\}}$ is the identity map. We also assume that $\Psi(\partial M \times (-1,0))\subset M$ and $\Psi(\partial M \times (0,1))\subset N\setminus M$.

Put $M_{\varepsilon}=M\cup \Psi(\partial M \times [0,\varepsilon])$, $0\le\varepsilon<1$, obtaining a nested family of subdomains in $N$, with $M_0=M$.

Let us now prove Theorem \ref{theorem A}. By Lemma \ref{chi-prop} the function $u$
satisfies the condition of Lemma \ref{1st-int-id-ns} with $D=M_{\varepsilon}$
for every $\varepsilon>0$. Then by Lemma \ref{1st-int-id-ns} we have
$$
\int_{SM_{\varepsilon}}({\bf F}Vu)^2\,d\mu-\int_{SM_{\varepsilon}}\mathbb{K}(Vu)^2\,d\mu=\int_{SM_{\varepsilon}}(V{\bf F}u)^2\,d\mu-\int_{SM_{\varepsilon}}({\bf F}u)^2\,d\mu
$$
for every $\varepsilon>0$. It is easy to see that the right-hand side of the last equation is nonpositive.
Indeed, since $V{\bf F}u=-V\alpha(x,v)$ we have
\[\int_{SM_\varepsilon}f\alpha\,d\mu=0\;\;\; \mbox{\rm and}\;\; \int_{SM_\varepsilon}\alpha^2\,d\mu=
\int_{SM_\varepsilon}(V\alpha)^2\,d\mu.\]
This follows from \cite[Lemma 4.4]{DPrigid}, which holds in any dimension. Thus
\[\int_{SM_{\varepsilon}}({\bf F}Vu)^2\,d\mu-\int_{SM_{\varepsilon}}\mathbb{K}(Vu)^2\,d\mu=
-\int_{SM_\varepsilon}f^{2}\,d\mu\leq 0.\]
Lemma \ref{chi-prop} allows us to pass to the limit as $\varepsilon\to 0$ in this identity. Using Theorem \ref{2nd-int-id} we obtain that $Vu\equiv 0$ on $RM$. This says that $u=-h$ almost everywhere for some $h\in C^\infty_0(M)$. Since $u\in C(SM)$, then $u=-h$ everywhere. But in this case, since
$d\pi_{(x,v)}({\bf F})=v$ we have
${\bf F}u=-dh_{x}(v)$. This clearly implies the claim of Theorem \ref{theorem A}.

\section{Proof of Theorem \ref{theorem C'}}
The considerations here generalize mostly  those in \cite{DPentr}, where the Riemannian case
was treated. We start Subsection \ref{4.1}
with derivation of the Riccati equation.
In Subsection \ref{4.2} we obtain from it a certain integral identity. The latter, combined with
the Pestov integral identity, will prove Theorem \ref{theorem C'} in Subsection \ref{4.3}.
Subsection \ref{4.4} plays an auxiliary role and is devoted to providing
a sufficient condition for a thermostat flow to be Anosov on using the
hyperbolicity test due to M.~Wojtkowski \cite{MW}.

\subsection{Riccati equation for Anosov thermostats}\label{4.1}
Recall that the Anosov property means that there is a continuous invariant splitting
$T(SM)=\mathbb{R} \mathbf F\oplus E^{u}\oplus E^{s}$ in such a way that
there are constants $C>0$ and $0<\rho<1<\eta$ such that
for all $t>0$ we have
\[\|d\phi_{-t}|_{E^{u}}\|\leq C\,\eta^{-t}\;\;\;\;\mbox{\rm
and}\;\;\;\|d\phi_{t}|_{E^{s}}\|\leq C\,\rho^{t},\]
where the norms are taken with respect to a Sasaki type Riemannian metric on $SM$
induced by the Finsler metric $F$. The subbundles are then invariant and H\"older continuous and have
smooth integral manifolds, the stable and unstable manifolds,
which define a continuous foliation with smooth leaves.

Let us introduce the weak stable and unstable bundles:
\[E^+=\mathbb{R} \mathbf F\oplus E^s,\]
\[E^-=\mathbb{R} \mathbf F\oplus E^u.\]

\begin{Lemma}\label{VnotinE}For any $(x,v)\in SM$, $V(x,v)\notin E^{\pm}(x,v)$.
\end{Lemma}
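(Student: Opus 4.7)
The plan is to argue by contradiction: assume $V(p)\in E^{+}(p)$ for some $p=(x,v)\in SM$; the case $V(p)\in E^{-}(p)$ is handled by the same argument run in backward time, with the roles of $r^{+}$ and $r^{-}$ interchanged. Since $E^{+}$ is $d\phi_{t}$-invariant and decomposes as $\mathbb{R}{\bf F}\oplus E^{s}$, write $V(p)=c\,{\bf F}(p)+\eta_{s}$ with $\eta_{s}\in E^{s}(p)$, so that $d\phi_{t}V(p)=c\,{\bf F}(\phi_{t}p)+d\phi_{t}\eta_{s}$. The Anosov contraction $\|d\phi_{t}\eta_{s}\|_{\dot g}\leq C\rho^{t}\|\eta_{s}\|_{\dot g}$ for $t\geq 0$, combined with the uniform bound on $\|{\bf F}\|_{\dot g}$ coming from compactness of $SM$, gives
$$\sup_{t\geq 0}\|d\phi_{t}V(p)\|_{\dot g}<\infty.$$

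I would then relate $d\phi_{t}V(p)$ to a scalar Jacobi field along the $\lambda$-geodesic $\gamma_{x,v}$. Consider the variation $(s,t)\mapsto \phi_{t}(e^{sV}(p))$, whose $s$-derivative at $s=0$ is $d\phi_{t}V(p)$. Expanding in the moving frame along the orbit,
$$d\phi_{t}V(p)=f(t)\,{\bf F}(\phi_{t}p)+y(t)\,H(\phi_{t}p)+z(t)\,V(\phi_{t}p),$$
with initial data $f(0)=y(0)=0$ and $z(0)=1$. Differentiating in $t$ and substituting the commutation relations \eqref{cmr2} produces a first-order linear system for $(f,y,z)$; eliminating $z$ and $f$, the scalar $y$ satisfies (after absorbing the $\lambda I+V(\lambda)$ correction) the Jacobi equation \eqref{Jacobi-eq}, with $y(0)=0$ and $\dot y(0)\neq 0$. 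In particular, the bound from the first paragraph forces $|y(t)|$ to remain bounded on $[0,\infty)$.

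The contradiction now comes from the Riccati picture of Section 2.3. The limit solutions $r^{\pm}$ of \eqref{riccati-1} are bounded, and $r^{+}$ (resp.\ $r^{-}$) is by construction the logarithmic derivative of the unstable (resp.\ stable) Jacobi direction along the orbit of $p$. Let $y^{\pm}$ be the corresponding nowhere-vanishing solutions of \eqref{Jacobi-eq}. The Anosov expansion bound $\|d\phi_{t}|_{E^{u}}\|\gtrsim \eta^{t}$ (with $\eta>1$) translates, via the identification of $E^{u}$ with the $y^{+}$-direction modulo the flow, into $|y^{+}(t)|\to\infty$ exponentially as $t\to+\infty$, while $|y^{-}(t)|\to 0$ exponentially. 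Writing $y=ay^{+}+by^{-}$, the condition $y(0)=0$ forces $a\neq 0$; hence $|y(t)|\sim |a|\,|y^{+}(t)|\to\infty$, contradicting the boundedness of $y$ established above.

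The main obstacle is the second paragraph: since ${\bf F}=X+\lambda V$ differs from the pure geodesic field $X$, identifying $y$ with a genuine scalar solution of \eqref{Jacobi-eq} requires careful bookkeeping with \eqref{cmr2} to absorb the $\lambda I$ and $V(\lambda)$ corrections, in the spirit of the derivation of Lemma \ref{riccati} in the excerpt. Once this reduction is carried out, the Anosov/Riccati step above is a direct analogue of the corresponding Finsler-magnetic argument of \cite{DPrigid}.
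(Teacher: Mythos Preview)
Your argument is circular. In the third paragraph you need nowhere-vanishing Jacobi solutions $y^{\pm}$ tracking $E^{u}$ and $E^{s}$, and you use $y^{-}(0)\neq 0$ to force the coefficient $a$ in $y=ay^{+}+by^{-}$ to be nonzero. But ``$y^{-}(0)\neq 0$'' says precisely that the stable quotient line $\hat E^{s}(p)$ is not spanned by $[V(p)]$, i.e.\ that $V(p)\notin E^{+}(p)$, which is the statement you are proving. Under your contradiction hypothesis $V(p)\in E^{+}(p)$, the stable direction \emph{is} $\mathbb{R}[V(p)]$, so the stable Jacobi solution along this orbit is a scalar multiple of your $y$ and vanishes at $t=0$; the basis $(y^{+},y^{-})$ degenerates and no growth contradiction arises. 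Invoking the limit Riccati solutions $r^{\pm}$ of Section~2.3 does not repair this: that construction (which already presupposes absence of conjugate points, not assumed here) gives bounded $r^{\pm}$ and hence nowhere-vanishing $y^{\pm}$, but only $r^{+}\geq r^{-}$, not strict inequality, and --- crucially --- it does not identify these $y^{\pm}$ with the Anosov bundles. The identification ``$H+r^{\pm}V\in E^{\pm}$'' used to transfer the expansion bound to $y^{+}$ is exactly what the paper sets up \emph{after} the lemma, because it requires the lemma.

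The paper's proof is of a completely different nature and bypasses the Riccati picture. It works in the bundle $\Lambda(SM)$ of $2$-planes containing $\mathbf F$, shows via $[V,\mathbf F]=H+V(\lambda)V$ that the lifted flow is transversal to the vertical section $\Lambda_{V}$, and extracts an intersection class $\mathfrak m\in H^{1}(\Lambda(SM),\Z)$. The induced index $\nu=E^{*}\mathfrak m$ is nonnegative on closed orbits by $\phi$-invariance of $E^{\pm}$; Ghys' theorem \cite{Ghy} that such Anosov flows are topologically conjugate to constant-curvature geodesic flows then forces $\nu=0$ on all of $H_{1}(SM,\Z)$. Finally, a single point with $V\in E^{\pm}$ would, using the non-wandering property as in \cite[Lemma~2.49]{P1}, produce a closed curve of strictly positive index --- a contradiction. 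This topological input (transversality plus the Ghys conjugacy) is the missing idea in your approach.
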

\begin{proof}
Let $\Lambda(SM)$ be the bundle above $SM$ that at each point $(x,v)\in SM$
consists of all 2-dimensional subspaces $W$ of $T_{(x,v)}SM$ with
$\mathbf F(x,v)\in W$.

The map $(x,v)\mapsto \R \mathbf F(x,v)\oplus \R V(x,v)$ is a section of
$\Lambda(SM)$ and its image is a codimension one submanifold that we denote by $\Lambda_{V}$ and call it {\it Maslov cycle}.
Similarly the map $(x,v)\mapsto \R \mathbf F(x,v)\oplus \R H(x,v)$ is a section of
$\Lambda(SM)$ and its image is a codimension one submanifold that we denote by $\Lambda_{H}$.

The flow $\phi$ naturally lifts to a flow $\phi^*$ acting on $\Lambda(SM)$ via its differential.
Let $\mathbf F^*$ be the infinitesimal generator of $\phi^*$.

\begin{Claim}\label{claim}
$\mathbf F^*$ is transversal to the Maslov cycle $\Lambda_V$.
\end{Claim}
\begin{proof}
Indeed, define a function $m:\Lambda(SM)\setminus \Lambda_{H}\to\R$
as follows. If $W\in\Lambda(SM)\setminus \Lambda_{H}$, then $H\notin W$.
Thus there exists a unique $m=m(W)$ such that $mH+V\in W$. Clearly $m$ is smooth
and $\Lambda_{V}=m^{-1}(0)\subset\Lambda(SM)\setminus \Lambda_{H}$.
Fix $(x,v)\in SM$ and
set
$$
m(t):=m(\phi^*_{t}(\R F(x,v)\oplus \R V(x,v))).
$$
By the definition of $m$, there exist functions
$x(t)$ and $y(t)$ such that

\[m(t)H(\phi_t(x,v))+V(\phi_t(x,v))=x(t)\mathbf F(\phi_t(x,v))+y(t)d\phi_{t}(V(x,v)).\]
Equivalently
\[m(t)d\phi_{-t}(H(\phi_t(x,v)))+d\phi_{-t}(V(\phi_t(x,v)))=x(t)\mathbf F(x,v)+y(t)V(x,v).\]
Differentiating with respect to $t$ and setting $t=0$ (recall that $m(0)=0$)
we obtain:
\[\dot{m}(0)H+[\mathbf F,V]=\dot{x}(0)\mathbf F+\dot{y}(0)V.\]
But $[V,\mathbf F]=H+V(\lambda)V$. Thus $\dot{m}(0)=1$ which proves the claim.
\end{proof}

From the Claim \ref{claim} it follows that $\Lambda_V$ determines an oriented
codimension one cycle in $\Lambda(SM)$ and by duality it defines
a cohomology class ${\mathfrak m}\in H^{1}(\Lambda(SM),\Z)$. Set $E=E^\pm$.
Given a continuous closed curve $\alpha:S^1\to SM$, the {\it index}
of $\alpha$ is $\nu(\alpha):=\langle {\mathfrak m}, [E\circ\alpha]\rangle$
(i.e. $\nu=E^*{\mathfrak m}\in H^{1}(SM,\Z)$).
The index of $\alpha$ only depends on the homology class
of $\alpha$.
Since $E$ is $\phi$-invariant, the Claim \ref{claim} also ensures that if
$\gamma$ is any closed orbit of $\phi$, then $\nu(\gamma)\geq 0$.

Recall that according to Ghys \cite{Ghy} we know that $\phi$ is topologically
conjugate to the geodesic flow of a metric of constant negative curvature.
In particular, every homology class in $H_{1}(SM,\Z)$ contains
a closed orbit of $\phi$. Thus $\nu$ must vanish.

If there exists $(x,v)\in SM$ for which
$V(x,v)\in E(x,v)$, then using that every point of $\phi$ is non-wandering,
we can produce exactly as in \cite[Lemma~2.49]{P1} a closed curve
$\alpha:S^1\to SM$ with $\nu(\alpha)>0$.
This contradiction shows the lemma.
\end{proof}

Lemma \ref{VnotinE} implies that there exist unique continuous functions $r^\pm(x,v)$ on $SM$ such that
\begin{align*}
H(x,v)+r^+(x,v)V(x,v)\in E^+,\\
H(x,v)+r^-(x,v)V(x,v)\in E^-.
\end{align*}
Note that the Anosov property implies that $r^\pm$ are smooth along $\phi$ and $r^+\ne r^-$ everywhere. Next, we show that the functions $r^\pm$ satisfy a Riccati type equation along the flow.

\begin{Lemma}The function $r=r^\pm$ satisfies
\begin{equation}\label{riccati-eq}
{\bf F}(r-V(\lambda))+r(\lambda I-V(\lambda)+r)+\mathbb{K}-\lambda IV(\lambda)=0.
\end{equation}
\end{Lemma}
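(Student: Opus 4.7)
The plan is to mimic exactly the proof of Lemma \ref{riccati} from the boundary case, now with the invariance of the weak (un)stable bundles $E^\pm$ playing the role that the invariant subspace $E$ played there. The key geometric input is that $E^\pm$ is $\phi$-invariant and, although only H\"older continuous on $SM$, is smooth along each orbit of $\phi$; this makes the restriction of $r=r^\pm$ to any orbit a smooth function of $t$, which is all we need to differentiate.

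First I fix $(x,v)\in SM$ and set $r(t):=r^\pm(\phi_t(x,v))$, so by the very definition of $r^\pm$ the vector
\[
H(\phi_t(x,v))+r(t)V(\phi_t(x,v))
\]
lies in $E^\pm(\phi_t(x,v))$. Using the $\phi$-invariance of $E^\pm$, the curve
\[
\xi(t):=d\phi_{-t}\bigl(H(\phi_t(x,v))+r(t)V(\phi_t(x,v))\bigr)
\]
then lies in the single fixed 2-plane $E^\pm(x,v)\subset T_{(x,v)}SM$. Hence $\dot\xi(0)\in E^\pm(x,v)$ as well.

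Second, I compute $\dot\xi(0)$ directly by Leibniz, using that $\frac{d}{dt}|_{t=0}d\phi_{-t}(W(\phi_t))=[\mathbf F,W]$ for any vector field $W$:
\[
\dot\xi(0)=[\mathbf F,H]+\mathbf F(r)\,V+r\,[\mathbf F,V].
\]
Now I substitute the two commutation relations from \eqref{cmr2}, namely $[V,\mathbf F]=H+V(\lambda)V$ and $[\mathbf F,H]=\{K-H(\lambda)-\lambda J+\lambda^2\}V-\lambda\mathbf F-\lambda IH$. After eliminating $H=\xi(0)-r V$ I obtain
\[
\dot\xi(0)+(r+\lambda I)\xi(0)-\lambda\mathbf F
=\bigl\{K-H(\lambda)-\lambda J+\lambda^2+\mathbf F(r)+\lambda I r-r V(\lambda)+r^2\bigr\}V,
\]
which is exactly the identity that appears in the proof of Lemma \ref{riccati}.

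Finally, the left-hand side lies in $E^\pm(x,v)=\mathbb R\mathbf F\oplus\mathbb R\xi(0)$, so its $V$-component must vanish (recall $V\notin E^\pm$ by the previous lemma). This forces
\[
K-H(\lambda)-\lambda J+\lambda^2+\mathbf F(r)+\lambda I r-r V(\lambda)+r^2=0,
\]
which, using the definition $\mathbb K:=K-H(\lambda)-\lambda J+\lambda^2+\lambda IV(\lambda)+\mathbf F V(\lambda)$, is equivalent to
\[
\mathbf F(r-V(\lambda))+r(\lambda I-V(\lambda)+r)+\mathbb K-\lambda IV(\lambda)=0,
\]
as claimed. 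The only conceptual subtlety, compared with the nontrapping case, is justifying that $r^\pm$ is differentiable along the orbit; this is where we use that $E^\pm$ is $\phi$-invariant and hence varies smoothly under the flow along each trajectory, so the rest of the argument is the same formal computation already carried out for Lemma \ref{riccati}.
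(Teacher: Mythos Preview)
Your proof is correct and follows essentially the same approach as the paper: defining $\xi(t)=d\phi_{-t}(H(t)+r(t)V(t))$, using $\phi$-invariance of $E^\pm$ to place $\dot\xi(0)$ in $E^\pm(x,v)$, computing $\dot\xi(0)$ via the bracket relations, eliminating $H$ in favor of $\xi(0)-rV$, and then reading off the vanishing of the $V$-component. Your explicit invocation of the previous lemma ($V\notin E^\pm$) and of the orbitwise smoothness of $r^\pm$ matches the paper's setup exactly.
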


\begin{proof}Let $E=E^\pm$. Fix $(x,v)\in SM$ and set
$$
\xi(t):=d\phi_{-t}(H(\phi_t(x,v))+r(\phi_t(x,v))V(\phi_t(x,v))).
$$
By the definition of $r$, $\xi(t)\in E(x,v)$ for all $t$. Differentiating with respect to $t$ and setting $t=0$ we obtain:
$$\dot{\xi}(0)=[{\bf F},H]+{\bf F}(r)V+r[{\bf F},V].$$
Using the commutation relations \eqref{cmr2} we have
$$\dot{\xi}(0)=-\lambda {\bf F}-\lambda I\xi(0)+\{K-H(\lambda)-\lambda J+\lambda^2+{\bf F}(r)-rV(\lambda)\}V.$$
Replacing $H$ by $\xi(0)-rV$ yields:
$$\dot{\xi}(0)+(r+\lambda I)\xi(0)-\lambda {\bf F}=\{K-H(\lambda)-\lambda J+\lambda^2+{\bf F}(r)+\lambda Ir-rV(\lambda)+r^2\}V.$$
Since $\dot{\xi}(0)+(r+\lambda I)\xi(0)-\lambda {\bf F}\in E$ we must have
$$K-H(\lambda)-\lambda J+\lambda^2+{\bf F}(r)+\lambda Ir-rV(\lambda)+r^2=0$$
which is the desired equation.
\end{proof}

\subsection{Second integral identity for Anosov thermostats}\label{4.2}
Here we prove the following integral identity which we use in the proof of Theorem \ref{theorem C}.
\begin{Theorem}\label{2nd-int-id-Anosov}Let $\varphi:SM\to \mathbb{R}$ be
a smooth function and suppose the flow $\phi_t$ is Anosov. Then for $r=r^\pm$
\[\int_{SM}({\bf F}\varphi)^2\,d\mu-\int_{SM}\mathbb{K}\varphi^2\,d\mu=\int_{SM}[{\bf F}(\varphi)-r\varphi+\varphi V(\lambda)]^2\,d\mu\geq 0.\]
Moreover,
\[\int_{SM}[{\bf F}(\varphi)-r\varphi+\varphi V(\lambda)]^2\,d\mu=0\]
if and only if $\varphi=0$.
\label{mainth}
\end{Theorem}

\begin{proof} We omit the proof of the first part since it is exactly the same as the first part of Theorem \ref{2nd-int-id}.
Suppose now
\[\int_{SM}[{\bf F}(\varphi)-r\varphi+\varphi V(\lambda)]^2\,d\mu=0,\]
which implies
\[{\bf F}(\varphi)-r\varphi+\varphi V(\lambda)=0\]
everywhere. Since this holds for $r=r^\pm$, we deduce:
$$(r^+-r^-)\psi=0.$$
But for an Anosov flow $r^+-r^-\ne0$. This surely implies that $\varphi\equiv 0$ on $SM$.
\end{proof}

\subsection{End of the proof of Theorem \ref{theorem C'}}\label{4.3}
Assume we have a cohomological equation
$$
\mathbf Fu(x,v)=f(x)+\alpha_x(v).
$$
By the Pestov integral identity we have
$$
\int_{SM}({\bf F}Vu)^2\,d\mu-\int_{SM}\mathbb{K}(Vu)^2\,d\mu=\int_{SM}(V{\bf F}u)^2\,d\mu-\int_{SM}({\bf F}u)^2\,d\mu.
$$
It is easy to see that the right-hand side of the last equation is nonpositive.
Indeed, since $V{\bf F}u=V\alpha(x,v)$ we have
\[\int_{SM}f\alpha\,d\mu=0\;\;\; \mbox{\rm and}\;\; \int_{SM}\alpha^2\,d\mu=
\int_{SM}(V\alpha)^2\,d\mu.\]
This follows from \cite[Lemma 4.4]{DPrigid}, which holds in any dimension. Thus
\[\int_{SM}({\bf F}Vu)^2\,d\mu-\int_{SM}\mathbb{K}(Vu)^2\,d\mu=
-\int_{SM}f^{2}\,d\mu\leq 0.\]
Using Theorem \ref{2nd-int-id-Anosov} we obtain that $Vu\equiv 0$ on $SM$,
which says that $u=h$ some $h\in C^\infty(M)$. Since $d\pi_{(x,v)}({\bf F})=v$ we have ${\bf F}u=dh_{x}(v)$.
This clearly implies the claim of Theorem \ref{theorem C'}.

\subsection{Sufficient condition for a thermostat flow  to be Anosov}\label{4.4}

\begin{Theorem}\label{theorem D}
If $(M,F,\lambda)$ is a thermostat on a closed oriented Finsler surface and
$$
K-H(\lambda)-\lambda J+\lambda^2+\frac{(\lambda I+V(\lambda))^2}{4}<0,
$$
then the flow $\phi_t$ is Anosov.
\end{Theorem}

\begin{proof}
We apply the hyperbolicity test  of \cite[Theorem 5.2]{MW}.

Given $\xi\in T_{(x,v)}(SM)$ we may write:
$$\xi=a{\bf F}(x,v)+yH(x,v)+zV(x,v).$$
Define a quadratic form ${\mathbb Q}$ on $SM$ by
$${\mathbb Q}(\xi)=yz.$$
Consider the {\it quotient bundle} $\hat{T}(SM)$ defined by
$$\hat{T}_{(x,v)}(SM):=T_{(x,v)}(SM)/{\mathbb R}{\bf F}(x,v).$$
Since $d\phi_t{\bf F}(x,v)={\bf F}(\phi_t(x,v))$, the differential $d\phi_t$
descends to the quotient to define a map $A_t: \hat{T}_{(x,v)}(SM)\to \hat{T}_{\phi_t(x,v)}(SM)$ satisfying
$$A_{s+t}=A_s\circ A_t.$$

By Theorem 5.2 of \cite{MW}, it suffices to prove that the flow $\phi_t$ is strictly monotone
with respect to the quadratic form $\mathbb Q$, i.e., the projection of the Lie derivative
$\mathcal L_\mathbf F \mathbb Q$ onto the quotient bundle is positive definite. To this end,
we need the following lemma.

\begin{Lemma}\label{Jacobi-system}
For $\xi\in T_{(x,v)}SM$ consider the representation of $d\phi_t(\xi)$ in terms of $\mathbf F, H, V$:
$$
d\phi_t(\xi)=a(t)\mathbf F(\phi_t(x,v))+y(t)H(\phi_t(x,v))+z(t)V(\phi_t(x,v)).
$$
Then the functions $a,y,z$ satisfy the following equations
\begin{align*}
\dot{a}&=\lambda y,\\
\dot{y}&=\lambda I y+z,\\
\dot{z}&=-\{K-H(\lambda)-\lambda J+\lambda^2\}y+V(\lambda)z.
\end{align*}
\end{Lemma}

\begin{proof}
Equivalently we have
$$
\xi=a(t)d\phi_{-t}({\bf F}(\phi_t(x,v)))+y(t)d\phi_{-t}(H(\phi_t(x,v)))+z(t)d\phi_{-t}(V(\phi_t(x,v))).
$$
If we differentiate the last equality with respect to $t$ we obtain:
$$
0=\dot{a}{\bf F}+\dot{y}H+y[{\bf F},H]+\dot{z}V+z[{\bf F},V].
$$
Using the commutation relations \eqref{cmr2} and regrouping, we find out the equations of Lemma \ref{Jacobi-system}.
\end{proof}

Continuing the prof of the theorem, we have
\begin{align*}
\mathcal L_\mathbf F \mathbb Q|_{\hat{T}(SM)}&= \frac{d}{dt}\Big|_{t=0}\hat{\mathbb Q}(A_t(\xi))=\frac{d}{dt}\Big|_{t=0}(y(t)z(t))\\
&=-\{K-H(\lambda)-\lambda J+\lambda^2\}y^2+(V(\lambda)+\lambda I)yz+z^2.
\end{align*}
By Sylvester's criterion it is positively definite if and only if
$$K-H(\lambda)-\lambda J+\lambda^2+\frac{(\lambda I+V(\lambda))^2}{4}<0,$$
which concludes the proof of the theorem.
\end{proof}

\section{Proof of Theorem \ref{theorem E}}
In Subsection \ref{5.1} we give an equivalent condition to the absence of
conjugate points for a thermostat in terms of the Jacobi equation. This is completely
analogous to the case of the geodesic flow.
Subsection~\ref{5.2} is devoted to constructing an integrable solution of
the corresponding Riccati equation.
This is achieved by reducing the problem to Hopf's construction in \cite{H}
and is similar to the considerations we used before in \cite[Sections 4-5]{AD}.
Finally, in Subsection~\ref{5.3} we conclude the proof of Theorem~\ref{theorem E}
by applying the Pestov integral identity.

\subsection{Thermostats without conjugate points}\label{5.1}
Put
$$
q=-\lambda I-V(\lambda),\quad k=\mathbb K-\mathbf FV(\lambda)-{\bf F}(\lambda I).
$$

Our aim in this subsection is the following

\begin{Theorem}\label{MSAPP}
A thermostat $(M,F,\lambda)$ has no conjugate points if and only if
all solutions of the Jacobi equation \begin{equation}\label{Jacobi-eq}
\ddot{y}+q\dot{y}+ky=0,
\end{equation}
along any $\lambda$-geodesic, vanish at most once.
\end{Theorem}

We consider a variation of the $\lambda$-geodesic $\gamma(t)=\pi\circ\phi_t(x,v)$
for some $(x,v)\in SM$. We take the variation $c(s,t)=\pi(\phi_t(Z(s)))$ of $\gamma$
that depends on a curve $Z\subset TM$ with $\dot Z(0)=\xi\in T_{(x,v)}SM$.
The vector field $J_{\xi}(t):=\de{c}{s}\Big|_{s=0}(t)$ (that depends on $\xi$) is called
a {\it Jacobi field} along $\gamma$.

\begin{Lemma}\label{cor1}Every Jacobi field $J_{\xi}$, expressed as
$$
J_\xi(t)=x(t)\dot\gamma(t)+y(t)i\dot\gamma(t),
$$
satisfies the {\it Jacobi equations}
\begin{align}
\dot{x}&=\lambda y,\label{Jacobi-eqx}\\
\ddot{y}&+q\dot{y}+ky=0.\label{Jacobi-eqy}
\end{align}
In particular, if a Jacobi field $J$ is tangent to the $\lambda$-geodesic $\gamma$ everywhere,
then $J = c\gamma$, where $c=const$.
\end{Lemma}
\begin{proof}
For $\xi\in T(SM)$, write
$$d\phi_t(\xi)=x(t){\bf F}+y(t)H+z(t)V,$$
equivalently
$$\xi=x(t)d\phi_{-t}({\bf F})+y(t)d\phi_{-t}(H)+z(t)d\phi_{-t}(V).$$
If we differentiate the last equality with respect to $t$, we obtain:
$$0=\dot{x}{\bf F}+\dot{y}H+y[{\bf F},H]+\dot{z}V+z[{\bf F},V].$$
Using the bracket relations and regrouping we arrive at \eqref{Jacobi-eqx} and \eqref{Jacobi-eqy}.
\end{proof}

Let $\gamma:[0,T]\to M$ be a $\lambda$-geodesic with endpoints $x=\gamma(0)$ and $y=\gamma(T)$.
We say that $x$ and $y$ are {\it conjugate along $\gamma$} if
the map $d_{T\dot\gamma(0)}\exp_x^{\lambda}$ has a non-maximal rank.
Note that this definition does not contradict the definition of the absence of
conjugate points mentioned in the Introduction.

There exists a simple but very useful and well-known relation
between the exponential map and the Jacobi fields:

\begin{Theorem}\label{conj-along-geodesic}
Let $\gamma:[0,T]\to M$ be a $\lambda$-geodesic with endpoints $x=\gamma(0)$ and $y=\gamma(T)$.
Then $x$ and $y$ are conjugate along $\gamma$ if and only if there exists a nonzero
Jacobi field along $\gamma$ satisfying $J(0)=J(T)=0$.
\end{Theorem}
\begin{proof}
For the proof we need the following

\begin{Lemma}\label{0,w}
Let $\gamma:[0,T]\to M$ be a $\lambda$-geodesic such that
$\gamma(t)=\exp^{\lambda}_x(tv)$, with $x=\gamma(0)$ and $v=\dot\gamma(0)$.
Let $w\in T_x M$. Then $J(t)=d_{tv}\exp^{\lambda}_x(tw)$ is a Jacobi field along $\gamma$.
Moreover, $J(0)=0$ and $D_{t}J(0)=w$.
\end{Lemma}
The proof of the lemma is standard and we give it for completeness
after the proof of theorem.

Suppose there exists a nonzero vector $w\in T_x M$ such that $d_{v}\exp^{\lambda}_x(w)=0$.
By Lemma~\ref{0,w}, the Jacobi field $J(t)=d_{tv}\exp^{\lambda}_x(tT^{-1}w)$ is nontrivial,
since $D_{t}J(0)=T^{-1}w\neq0$, and it satisfies $J(0)=J(T)=0$.

Conversely, if the points $x$ and $y$ are conjugate along $\gamma$
then there exists a nontrivial Jacobi field $J$ along $\gamma$ such that $J(0)=J(T)=0$.
Let $D_t J(0)=w$. Then $w\neq0$ and by Lemma \ref{0,w} $d_{T\dot\gamma(0)}\exp^{\lambda}_x(Tw)=J(T)=0$ so that $y=\exp^{\lambda}_x(T\dot\gamma(0))$.
This means that $w\in\ker d_{T\dot\gamma(0)}\exp^{\lambda}_x$.
\end{proof}

\begin{proof}[Proof of Lemma \ref{0,w}]
Consider the variation $c(s,t)=\exp^{\lambda}_x(t(v+sw))$ of $\gamma$. Since
$$
\de{c}{s}(s,t)=\de{\exp^{\lambda}_x(t(v+sw))}{s}=d_{t(v+sw)}\exp^{\lambda}_x(tw),
$$
the vector field $J(t)$ is a Jacobi field. Once $d_0\exp^{\lambda}_x$ is the identity map,
we have
$$
J(0)=d_{0}\exp^{\lambda}_x(0)=0.
$$
It is well known that $D_s Y(s,t)=D_t J(s,t)$, where $Y(s,t)=\de{c}{t}(s,t)$ and $J(s,t)=\de{c}{s}(s,t)$. Then
$$
D_t J(s,0)=D_sY(s,0)=D_s(d_0\exp^{\lambda}_x(v+sw))=\de{(v+sw)}{s}=w,
$$
which finishes the proof.
\end{proof}

Let $\xi\in T_{(x,v)}TM$, and let $Z(t)=(\alpha(t),z(t))$ be any curve with $Z(0)=(x,v)$ and $\dot{Z}(0)=\xi$. Define the {\it connection map}
$$
{\mathcal K}_{(x,v)}(\xi):=\nabla_{\dot{\alpha}}\dot{z}(0).
$$
For $(x,v)\in TM$, define the horizontal subbundle by
$\mathcal H(x,v)=\ker\mathcal K_{(x,v)}$. So we obtain the following isomorphism:
$$
T_{(x,v)}TM\to \mathcal H(x,v)\oplus \mathcal V(x,v),\quad
\xi\mapsto(d\pi_{(x,v)}(\xi),\mathcal K_{(x,v)}(\xi)).
$$

\begin{proof}[Proof of Theorem~\ref{MSAPP}]
Assume that a thermostat has no conjugate points.
Let $\gamma(t)$, $0\le t\le T$, be a $\lambda$-geodesic.
By Lemma~\ref{r-exists}, $d_{\dot{\gamma}(0)}\phi_t(E)$ can be seen to be
a graph over the horizontal subspace for $t\in(0,T]$. We may thus express
$$
d_{\dot{\gamma}(0)}\phi_t(E)=\graph S:=\{(v,S(t)v), v\in \mathcal H(\dot\gamma(t)\}
$$
with some $S(t):\mathcal H(\dot\gamma(t))\to \mathcal V(\dot\gamma(t))$ for $t\in(0,T]$.
We have (see, for instance, \cite[Lemma 3.1]{JP})
\begin{equation}\label{dp=j,dj}
d\phi_t(\xi)=(J_{\xi}(t),\dot J_{\xi}(t)).
\end{equation}
Let $u(t):=\langle S(t)i\dot\gamma,i\dot\gamma\rangle$.
In view of \eqref{dp=j,dj}, $\dot J_{\eta}=SJ_{\eta}$
for all $\eta\in T_{(x,v)}SM$. Using all of this,
we obtain that $\dot y=uy$. Since $u$ is well defined for all $t\in(0,T]$,
it is easy to see that $y$ never vanishes for $t\in(0,T]$.

Conversely, suppose that, for any $\lambda$-geodesic $\gamma(t)$,
any solution of \eqref{Jacobi-eq} on $\gamma$ with $y(a)=y(b)=0$
for some $a<b$ is necessarily $y\equiv0$.
Let $J(t)$ be a Jacobi field along $\gamma$ with $J(a)=J(b)=0$.
Using Lemma \ref{cor1} we then conclude that $J(t)$ is of the form
$J=c\dot\gamma$. As soon as $J(a)=J(b)=0$, $J$ vanishes identically.
Appealing to Theorem \ref{conj-along-geodesic}, we conclude the proof.
\end{proof}

\subsection{Riccati equation}\label{5.2}
Let $\gamma(t)$, $-\infty<t<+\infty$, be a complete unit speed $\lambda$-geodesic.
The Jacobi equation on $\gamma$ is:
\begin{equation}\label{j-e}
\ddot{y}+q\dot{y}+ky=0.
\end{equation}
If $y(t)$ is a nowhere vanishing solution of \eqref{j-e},  then
$r(t)=\frac{\dot y(t)}{y(t)}$ is a solution of the {\it Riccati equation}
\begin{equation}\label{ric-1}
\dot{r}+r^2+qr+k=0.
\end{equation}

Let
$$
m(t):=\exp{\left(-\frac12\int q(t)\,dt\right)}.
$$
If $y(t)=m(t)z(t)$ then $z(t)$ is a solution of
the equation
\begin{equation}\label{j-e-z}
\ddot{z}+\tilde k z=0,
\end{equation}
where
\begin{equation}\label{tilde-k}
\tilde k(t)=k(t)-\frac{\dot q}2+\frac{q^2}4.
\end{equation}
Since $m(t)$ is nowhere zero,  equation \eqref{j-e} has no conjugate points if
and only if so does equation \eqref{j-e-z}.

The Riccati equation corresponding to \eqref{j-e-z} is
\begin{equation}\label{ric-2}
\dot{u}+u^2+\tilde k=0.
\end{equation}
Clearly, the solutions of \eqref{ric-1} and \eqref{ric-2} are related by
\begin{equation}\label{u-r}
r(t)=u(t)-q(t)/2.
\end{equation}

Observe that,  once $SM$ is compact, there is a constant $A\ge0$ such that
$$
|\tilde k(x,v)|=\left|k(x,v)-\frac{F(q(x,v))}2+\frac{q^2(x,v)}4\right|\le A^2
$$
for all $(x,v)\in SM$. Since $\tilde k(t)$ is the restriction of
$\tilde k(x,v)$ to $(\gamma,\dot\gamma)$, we have
$$
|\tilde k(t)|\le A^2.
$$

In \cite{H},  Hopf constructed a solution $u(t)$ of \eqref{ric-2} such that $|u(t)|\le A$
for all $t$. Considering all $\gamma$ gives a bounded function $u(x,v)$ on $SM$ whose
resctriction to any $\gamma$ is a solution of \eqref{ric-2}, and Hopf proves in \cite{H}
that this $u(x,v)$ is a measurable function on $SM$.
In view of \eqref{u-r},  taking $r(x,v)=u(x,v)-q(x,v)/2$
then yields a bounded measurable function $r(x,v)$ whose restriction to any $\gamma$
is a solution of \eqref{ric-1}. From \eqref{ric-1} we readily infer that $r(x,v)$
satisfies the following equation on $SM$:
\begin{equation}\label{riccati-1}
\mathbf F(r)+r^2+qr+k=0.
\end{equation}

\subsection{End of the proof of Theorem \ref{theorem E}}\label{5.3}
Using the same arguments as in the proof of Theorem~\ref{1st-int-id},
we can derive the following integral identity for any $\varphi\in C^\infty(SM)$:
\begin{equation}\label{2nd}
\int_{SM}({\bf F}\varphi)^2\,d\mu-\int_{SM}\mathbb K \varphi^2\,d\mu=
\int_{SM}[{\bf F}(\varphi)-r\varphi+\varphi(\lambda I+V(\lambda))]^2\,d\mu\geq 0.
\end{equation}
Applying Theorem \ref{1st-int-id} for a smooth function $u:SM\to\mathbb R$, we get
\begin{equation}\label{c-1-int-id}
\int_{SM}({\bf F}Vu)^2\,d\mu-\int_{SM}\mathbb{K}(Vu)^2\,d\mu=\int_{SM}(V{\bf F}u)^2\,d\mu-\int_{SM}({\bf F}u)^2\,d\mu,
\end{equation}

If ${\bf F}u=f$, then it is obvious that
the right-hand side of \eqref{c-1-int-id} is nonpositive. Thus
$$
\int_{SM}(V{\bf F}u)^2\,d\mu-\int_{SM}({\bf F}u)^2\,d\mu=
-\int_{SM}f^2\,d\mu\leq 0.
$$
Combining this with \eqref{2nd} written for $\varphi=Vu$ yields $f=0$.
The proof of Theorem~\ref{theorem E} is complete.

\end{document}